\begin{document}
\textwidth 5.5in
\textheight 8.3in
\evensidemargin .75in
\oddsidemargin.75in

\newtheorem{lem}{Lemma}
\newtheorem{conj}{Conjecture}
\newtheorem{defi}{Definition}
\newtheorem{thm}{Theorem}
\newtheorem{cor}{Corollary}
\newtheorem{lis}{List}
\newtheorem{rmk}{Remark}
\newtheorem{que}{Question}
\newtheorem{prop}{Proposition}
\newcommand{\p}[3]{\Phi_{p,#1}^{#2}(#3)}
\def\Z{\mathbb Z}
\def\R{\mathbb R}
\def\g{\overline{g}}
\def\odots{\reflectbox{\text{$\ddots$}}}
\newcommand{\tg}{\overline{g}}
\def\proof{{\bf Proof. }}
\def\ee{\epsilon_1'}
\def\ef{\epsilon_2'}
\title{Heegaard Floer homology of Matsumoto's manifolds}
\author{Motoo Tange}
\thanks{The author is partially supported by JSPS KAKENHI Grant Number 24840006 and 26800031.}
\subjclass{57M27, 57R60}
\keywords{Matsumoto's homology spheres, Whitehead double, contractible bound, slice knot, rational 4-ball, double branched cover}
\address{University of Tsukuba, 1-1-1 Tennodai, Tsukuba, Ibaraki 305-8571 Japan}
\email{tange@math.tsukuba.ac.jp}
\date{\today}
\maketitle
\begin{abstract}
We consider a homology sphere $M_n(K_1,K_2)$ presented by two knots $K_1,K_2$ with linking number 1 and framing $(0,n)$.
We call the manifold {\it Matsumoto's manifold}.
We show that there exists no contractible bound of $M_n(T_{2,3},K_2)$ if $n<2\tau(K_2)$ holds.
We also give a formula of Ozsv\'ath-Szab\'o's $\tau$-invariant as the total sum of the Euler numbers of the reduced filtration. 
We compute the $\delta$-invariants of the twisted Whitehead doubles of torus knots and correction terms of the branched covers of the Whitehead doubles.
By using Owens and Strle's obstruction we show that the $12$-twisted Whitehead double of the $(2,7)$-torus knot and the $20$-twisted Whitehead double of the $(3,7)$-torus knot are not slice 
but the double branched covers bound rational homology 4-balls.
These are the first examples having a gap between sliceness and rational 4-ball bound-ness of the double branched cover.
\end{abstract}
\section{Introduction and computational results.}
\subsection{Matsumoto's manifold and the contractible bound-ness.}
Let $K_1,K_2$ be two knots.
We define to be $M_n(K_1,K_2)$ a homology 3-sphere presented by $K_1$, and $K_2$ with geometrically linking number one and 
framing $(0,n)$ respectively. 
See {\sc Figure}~\ref{matsumoto}.
\begin{figure}[htpb]
\begin{center}\includegraphics{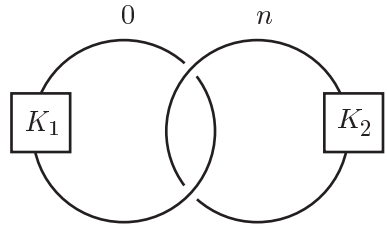}\caption{Matsumoto manifold $M_n(K_1,K_2)$.}\label{matsumoto}\end{center}
\end{figure}
Let $W_n(K_1,K_2)$ be a 4-manifold described by the same picture.
The 4-manifold $W_n(K_1,K_2)$ is a homology $S^2\times S^2-\nu(B^4)$, where $B^4$ is the standard 4-ball.
Y. Matsumoto asked in \cite{K} whether two generators in $H_2(W_0(K_1,K_2))$ can be realized by an embedded $S^2\vee S^2$ (the one-point union of two 2-spheres) or not.
If yes, then $M_0(K_1,K_2)$ bounds a contractible 4-manifold.
If no, then the generators are realized by two embedded Casson handles in $W_0(K_1,K_2)$, and it is an exotic 2-handle homeomorphic to $D^2\times {\Bbb R}^2$, by Freedman's theorem \cite{Fr}.
Following Matsumoto, we call $M_n(K_1,K_2)$ {\it Matsumoto's manifold} in this paper.

If $K_1$ is a slice knot, then $M_n(K_1,K_2)$ bounds a contractible 4-manifold, because 
$M_n(K_1,K_2)$ is the boundary of the slice disk complement $K_1$ together with a 2-handle.
Thus, the attachment has $\pi_1=e$ and $H_2=H_3=0$, hence it is a contractible 4-manifold.
If $K_1$ is not slice, the problem of the contractible bound-ness is unclear.

Let $K$ be a knot in $S^3$.
Then we have
$$M_n(T_{2,3},K)=S^3_1(D_+(K,n)),$$
where $T_{r,s}$ is the positive $(r,s)$-torus knot and
$D_+(K,n)$ is the $n$-twisted (positive-clasped) Whitehead double of $K$.
$S^3_p(K')$ is $p$-surgery of $K'$ in $S^3$.
For example, {\sc Figure}~\ref{Wdouble} is the picture of $D_+(T_{2,3},n)$.
Let $F$ be the figure-8 knot.
Then we have $M_n(F,K)=S^3_{-1}(D_+(K,n))$.
We argue the existence of contractible bounds of $M_n(T_{2,3},K)$ in the present paper.
\begin{figure}[htbp]
\begin{center}
\includegraphics{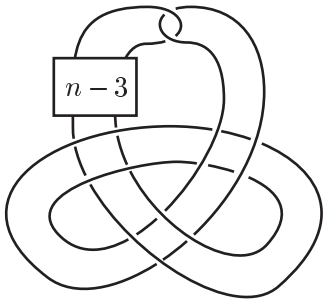}
\caption{The $n$-twisted Whitehead double of the trefoil.}
\label{Wdouble}
\end{center}
\end{figure}

If $K_2$ is a slice knot, then $D_+(K_2,0)$ is slice as in \cite{Kau}.
Thus, $M_0(T_{2,3},K_2)$ has a contractible bound.

The Alexander polynomial of the $n$-twisted Whitehead double is as follows:
$$\Delta_{D_+(K,n)}(t)=-nt+2n+1-nt^{-1}.$$
The result in \cite{FM} says that
if $K'$ is slice, then the Alexander polynomial is of form $\Delta_{K'}(t)=f(t)f(t^{-1})$, where $f(t)$ is a polynomial with integer coefficients.
Hence, if $D_+(K,n)$ is slice, then $\Delta_{D_+(K,n)}$ must be of form $f(t)f(t^{-1})$.
This condition is equivalent to $n=m(m+1)$ for some integer $m$.

Table~\ref{tableofslice} is a list of well-known facts about sliceness of $D_+(K,n)$ and 
contractible bound-ness of $M_n(K_1,K_2)$.
\begin{table}[htbp]
\begin{tabular}{|c|l|l|}\hline
1976&Casson, Rolfsen&$D_+(T_{2,3},6)$ is slice. \cite{R, Kau}\\\hline
1984&Maruyama&$M_6(T_{2,3},T_{2,3})$ bounds a contractible 4-manifold. \cite{M}\\\hline
1997&Akbulut&$M_0(T_{2,3},T_{2,3})$ bounds no contractible 4-manifold. \cite{A}\\\hline
2006&Bar-Natan&$D_+(T_{2,3},2)$ is not slice. \cite{Ba}\\\hline
2007&Hedden&The computation of $\tau(D_+(K,n))$. \cite{H}\\\hline
2012&Collins&$D_+(T_{p,q},n)$ is slice for any relatively prime $(p,q)$\\
&&for at most one $n$. \cite{JC}\\\hline
2013&Tsuchiya&$M_{2n+1}(T_{2,3},T_{2,3})$ bounds no contractible 4-manifold. \cite{T}\\\hline
\end{tabular}
\caption{The well-known results for the sliceness of $D_+(T_{p,q},n)$ and 
the contractible bound-ness of $M_n(T_{2,3},K)$.}
\label{tableofslice}
\end{table}
We here state Ozsv\'ath-Szab\'o's $\tau$-invariant formula of $D_+(K,n)$ by Hedden.
The $\tau$-invariant by Ozsv\'ath and Szab\'o is a homomorphism from the smooth knot concordance group to integers, i.e., $\tau:{\mathcal C}_{\text{sm}}\to {\Bbb Z}$.
\begin{thm}[\cite{H}]
\label{Hedd}
Let $K$ be a knot in $S^3$.
$$\tau(D_+(K,n))=\begin{cases}0&n\ge 2\tau(K)\\1&n<2\tau(K).\end{cases}$$
In particular, if $n<2\tau(K)$, then $D_+(K,n)$ is not slice.
\end{thm}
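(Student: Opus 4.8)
The plan is to pin down $\tau(D_+(K,n))$ by combining a cheap reduction with a genuine Heegaard Floer computation of the double in terms of $K$. First I would exploit that $D_+(K,n)$ has Seifert genus $1$, so $\tau(D_+(K,n))\in\{-1,0,1\}$. Next, observe that a single crossing change at the clasp unknots the pattern: changing one of the two like-signed clasp crossings cancels the clasp, and the remaining $n$-twisted pair of parallel strands bounds a disk, hence is the unknot $U$, independently of $n$ and $K$. Since the positive clasp means $D_+(K,n)$ plays the role of the positive-crossing knot $L_+$ and $U$ the role of $L_-$, the crossing-change inequality $\tau(L_-)\le\tau(L_+)\le\tau(L_-)+1$ gives $0=\tau(U)\le\tau(D_+(K,n))\le 1$. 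Thus the whole problem reduces to deciding between $\tau=0$ and $\tau=1$. Note also that the last sentence of the statement is then immediate: $\tau$ is a concordance invariant vanishing on slice knots, so once we exhibit $\tau(D_+(K,n))=1$ for $n<2\tau(K)$, non-sliceness follows.

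To decide $0$ versus $1$ I would compute the top of $\widehat{HFK}(D_+(K,n))$ directly from the filtered complex of $K$. The Whitehead pattern has winding number zero in the solid torus, so the knot Floer homology of the satellite is governed by large surgery on $K$; the filtration level $A=1$ part of the double is assembled from the subcomplexes $\widehat{F}(K,j)\subset\widehat{CFK}(K)$. I would encode the relevant data in the integers $a_j=\operatorname{rk}\bigl(H_*(\widehat{F}(K,j))\to\widehat{HF}(S^3)=\Bbb F\bigr)\in\{0,1\}$, which by the definition of $\tau(K)$ satisfy $a_j=0$ exactly when $j<\tau(K)$ and $a_j=1$ for $j\ge\tau(K)$. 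Concretely I would set up a sequence of surgery/skein exact triangles relating $D_+(K,n)$, the unknot $U$, and the oriented resolution of the clasp (a cable of $K$, built from the two anti-parallel strands), and carry the Maslov and Alexander gradings through these triangles so as to identify which graded piece of the double carries the generator of $\widehat{HF}(S^3)$.

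The decisive point is where the filtration map of $D_+(K,n)$ first becomes surjective onto $\widehat{HF}(S^3)$. Heuristically the threshold is $2\tau(K)$ rather than $\tau(K)$ because the genus-one Seifert surface of the double is a band that follows $K$ twice, so the framing $n$ is compared against two copies of $\tau(K)$; the precise value drops out of the grading bookkeeping in the exact triangles. When $n\ge 2\tau(K)$ the generator already appears at Alexander grading $0$, forcing $\tau(D_+(K,n))=0$, whereas for $n<2\tau(K)$ it only appears at Alexander grading $1$, forcing $\tau(D_+(K,n))=1$. The main obstacle is the computation of the previous paragraph: establishing the satellite formula expressing $\widehat{HFK}(D_+(K,n))$ in terms of the filtered invariants $a_j$ of $K$, and controlling the Maslov gradings tightly enough to locate the surjectivity threshold exactly at $2\tau(K)$ rather than merely bounding it. Everything else --- the genus and crossing-change reductions and the concordance-invariance conclusion --- is routine once this formula is in hand.
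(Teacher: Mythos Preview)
The paper does not prove this theorem: it is stated with the attribution \cite{H} (Hedden, \emph{Knot Floer homology of Whitehead doubles}) and used as a black box throughout. So there is no ``paper's own proof'' to compare against; the paper simply imports the result.

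Your sketch is, in outline, Hedden's actual argument. The reduction $\tau(D_+(K,n))\in\{0,1\}$ via the genus bound $|\tau|\le g=1$ together with the crossing-change inequality applied to unclasping is exactly how Hedden frames the problem, and the decisive computation is indeed a determination of the filtered chain homotopy type of $\widehat{CF}(S^3)$ induced by $D_+(K,n)$ in terms of the filtration $\mathcal{F}(K,j)$ of $K$. Your description of that step is, however, more of a plausibility argument than a proof: the actual work in \cite{H} goes through a computation of $\widehat{HFK}$ of the $(2,2n)$-cable of $K$ (via a reduced Heegaard diagram and a large-surgery identification), then passes to the Whitehead double by a skein triangle at the clasp, tracking the Maslov grading of the distinguished generator. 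The ``heuristic'' you offer for why $2\tau(K)$ (rather than $\tau(K)$) is the threshold is not a substitute for that grading bookkeeping, and ``set up a sequence of surgery/skein exact triangles'' hides essentially all of the content. If you want this to stand as a proof rather than a citation, you would need to supply the cable computation and the grading analysis explicitly; otherwise, citing \cite{H} as the paper does is the honest option.
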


We can easily compute the Casson invariant by using the Dehn surgery formula as follows:
\begin{equation}
\label{casson}
\lambda(M_n(T_{2,3},K))=\frac{1}{2}\cdot\Delta''_{D_+(K,n)}(t)|_{t=1}=-n.
\end{equation}
One of the main purposes of the present paper is to compute of $HF^+$ of $S^3_1(D_+(T_{2,3},n))$
and to generalize to $S^3_1(D_+(K,n))$ and to discuss the contractible bound-ness of $M_n(T_{2,3},K)$.
Let $M_n(K)$ denote $M_n(T_{2,3},K)$.
\begin{thm}
\label{main}
The Heegaard Floer homology of $M_n(T_{2,3})$ is computed as follows:
$$HF^+(M_n(T_{2,3}))=\begin{cases}{\mathcal T}^+_{(0)}\oplus HF_{\text{\normalfont{red}}}(M_n(T_{2,3}))&n\ge 2\\{\mathcal T}^+_{(-2)}\oplus HF_{\text{\normalfont{red}}}(M_n(T_{2,3}))&n<2,\end{cases}$$
and further
$$HF_{\text{\normalfont red}}(M_n(T_{2,3}))\cong
\begin{cases}
{\Bbb F}_{(-1)}^{n-2}\oplus{\Bbb F}_{(-3)}^2&n\ge 2\\
{\Bbb F}_{(-2)}^{1-n}\oplus {\Bbb F}_{(-3)}^2&n<2.\end{cases}$$
\end{thm}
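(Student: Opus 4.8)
The plan is to exploit the identification $M_n(T_{2,3})=S^3_{+1}(D_+(T_{2,3},n))$ recalled above and to compute the right-hand side from knot Floer homology via an integer surgery formula. Thus the whole computation rests on first obtaining the filtered chain homotopy type of $CFK^\infty(D_+(T_{2,3},n))$, and then running Ozsv\'ath-Szab\'o's mapping cone for framing $+1$.

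For the input I would proceed as follows. The double $D_+(T_{2,3},n)$ has Seifert genus one, so $\widehat{HFK}$ is supported in Alexander gradings $\{-1,0,1\}$, and the Euler characteristics in those three gradings are the coefficients $-n,\ 2n+1,\ -n$ of $\Delta_{D_+(T_{2,3},n)}(t)$. Feeding in Hedden's computation of $\widehat{HFK}(D_+(K,t))$ specialized to $K=T_{2,3}$, together with the value $\tau(D_+(T_{2,3},n))\in\{0,1\}$ furnished by Theorem~\ref{Hedd}, pins down the bigraded groups $\widehat{HFK}_*$ in every Maslov grading. The crux is then to \emph{upgrade} this associated-graded data to a genuine model for $CFK^\infty$: for genus-one knots the filtered complex splits, up to filtered homotopy, as one staircase summand plus several square (``box'') summands. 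The staircase is dictated by $\tau$ --- a single generator when $n\ge 2$ (so $\tau=0$) and the trefoil staircase when $n<2$ (so $\tau=1$) --- while the number of boxes and their absolute Maslov gradings are forced by the ranks from the previous step and by the requirement that $H_*(CFK^\infty)=\mathbb{F}[U,U^{-1}]$.

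With an explicit model in hand I would run the integer surgery formula: $HF^+(S^3_{+1}(D))$ is the homology of the mapping cone of $D^+\colon\bigoplus_s A_s^+\to\bigoplus_s \mathcal{T}^+$, where the vertical and horizontal maps $v_s,h_s$ are isomorphisms for all but finitely many $s$ because the genus is one, so the cone collapses to a finite computation. The staircase summand controls the surviving tower and its bottom degree, namely the correction term $d(S^3_{+1}(D))=-2V_0(D)$, which equals $0$ when $n\ge 2$ and $-2$ when $n<2$, in agreement with $\tau$. Each box summand, being acyclic over $\mathbb{F}[U,U^{-1}]$, contributes a fixed reduced piece; summing these gives $HF_{\text{red}}$. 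Tracking absolute gradings through the surgery shift should then produce $\mathbb{F}^{n-2}_{(-1)}\oplus\mathbb{F}^2_{(-3)}$ for $n\ge 2$ and $\mathbb{F}^{1-n}_{(-2)}\oplus\mathbb{F}^2_{(-3)}$ for $n<2$. As a safeguard against grading slips I would verify $\chi(HF_{\text{red}})-\tfrac12 d=\lambda(M_n(T_{2,3}))=-n$ against the Casson invariant~(\ref{casson}); one checks directly that both cases give $-n$.

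The main obstacle is the reconstruction of $CFK^\infty$ in the second step. The associated graded $\widehat{HFK}$ is blind to the diagonal differentials of the filtered complex, so determining the \emph{exact} number of boxes and, above all, their \emph{absolute} gradings cannot be read off from Hedden's groups alone; it requires genuinely invoking the genus-one classification of $CFK^\infty$, the conjugation symmetry of the filtration, the value of $\tau$, and the acyclicity of $HF^\infty$. Getting these gradings precisely right is exactly what makes the reduced summands land in degrees $(-1),(-2),(-3)$ rather than merely determining their ranks.
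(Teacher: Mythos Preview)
Your route differs from the paper's, and the difference is precisely at the step you flag as the obstacle. The paper never reconstructs $CFK^\infty(D_+(T_{2,3},n))$. Instead it quotes Hedden's Proposition~7.2, which already expresses $\widehat{HF}(S^3_1(D_+(K,n)))$ in terms of the knot filtration of the \emph{companion} $K$, not of the double; specializing to $K=T_{2,3}$ and plugging in $H_\ast(\mathcal{F}(T_{2,3},i))$ gives $\widehat{HF}(M_n)$ outright. The correction term is then read off from Hedden's description of the filtered slice $C\{i=0\}$ (again for the double, in terms of data for $T_{2,3}$), yielding $d=0$ for $n\ge 2$ and $d=-2$ for $n<2$. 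Finally, the paper passes from $\widehat{HF}$ to $HF^+$ by writing $HF^+=\mathcal{T}^+_{(d)}\oplus\bigoplus_i\mathbb{F}[n_i]_{(d_i)}$, so that $\widehat{HF}=\mathbb{F}_{(d)}\oplus\bigoplus_i(\mathbb{F}_{(d_i)}\oplus\mathbb{F}_{(d_i+2n_i-1)})$, and observes that the only possible tower of height $>1$ would be an $\mathbb{F}[2]_{(-3)}$ matching a pair $\mathbb{F}_{(0)},\mathbb{F}_{(-3)}$. The Casson invariant~(\ref{casson}) is then used \emph{decisively}, not as a safeguard: each such hypothetical pair shifts $\lambda$ by $+1$, contradicting $\lambda(M_n)=-n$, so all $n_i=1$ and the claimed $HF_{\text{red}}$ follows.

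What each approach buys: the paper's route is shorter because Hedden's surgery formula packages exactly the filtration information needed, so one never confronts the diagonal-differential ambiguity in $CFK^\infty$ of the double. Your mapping-cone approach is more self-contained and would generalize cleanly if you had the full $CFK^\infty$, but the genus-one ``staircase plus boxes'' decomposition you invoke is not something you can simply read off from $\widehat{HFK}$ and $\tau$; you would need to justify it (and the box gradings) separately, which is real work. If you want to salvage your line, one option is to mimic the paper at the last step: run the mapping cone only far enough to get $\widehat{HF}$ and $d$, and then let the Casson invariant, promoted from consistency check to structural constraint, finish the argument.
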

This computation is generalized to the case of any knot $K$ as well as $T_{2,3}$ as follows:
\begin{thm}
\label{genmain}
Let $K$ be a knot in $S^3$ with genus $g$.
The Heegaard Floer homology of $M_n(K)$ is computed as follows:
$$HF^+(M_n(K))\cong 
\begin{cases}
{\mathcal T}^+_{(0)}\oplus HF_{\text{\normalfont red}}(M_n(K))&n\ge 2\tau(K)\\
{\mathcal T}^+_{(-2)}\oplus HF_{\text{\normalfont red}}(M_n(K))&n<2\tau(K)
\end{cases}$$
and further,
$$HF_{\text{\normalfont red}}(M_n(K))\cong
\begin{cases}
{\Bbb F}_{(-1)}^{n-2\tau(K)}\overset{g}{\underset{i=-g}{\oplus}}H_{\ast+1}(\widetilde{\mathcal F}(K,i))^2&n\ge 2\tau(K)\\
{\Bbb F}_{(-2)}^{2\tau(K)-n-1}\overset{g}{\underset{i=-g}{\oplus}}H_{\ast+1}(\widetilde{\mathcal F}(K,i))^2&n< 2\tau(K).
\end{cases}$$
\end{thm}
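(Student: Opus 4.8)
The plan is to exploit the surgery description $M_n(K)=S^3_1(D_+(K,n))$ and to compute $HF^+$ of this $+1$ surgery by the Ozsv\'ath--Szab\'o integer surgery (mapping cone) formula, with the knot Floer input supplied by Hedden's computation of the Whitehead double. The decisive simplification is that $D_+(K,n)$ has Seifert (hence Floer) genus $1$: its knot Floer complex is supported in Alexander gradings $-1,0,1$, so the mapping cone for the framing $+1$ truncates to a complex concentrated at the single $\mathrm{spin}^c$ index $s=0$.

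First I would record the knot Floer input. By Hedden's work the filtered chain homotopy type of $CFK^\infty(D_+(K,n))$ is governed by the filtered complex $\widehat{CF}(S^3)$ of $K$: the reduced pieces in Alexander gradings $\pm 1$ are built from the homologies $H_*(\widetilde{\mathcal F}(K,i))$ of the filtration levels of $K$, while the tower behaviour is controlled by $\tau(D_+(K,n))\in\{0,1\}$, which equals $0$ when $n\ge 2\tau(K)$ and $1$ when $n<2\tau(K)$ by Theorem \ref{Hedd}. The total rank of the reduced knot Floer homology grows linearly in $|n-2\tau(K)|$; this is the origin of the $\mathbb{F}^{\,n-2\tau(K)}$ and $\mathbb{F}^{\,2\tau(K)-n-1}$ summands.

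Next I would run the surgery formula. In the mapping cone $\mathrm{Cone}\bigl(\bigoplus_s A_s^+\to\bigoplus_s B_s^+\bigr)$, every $B_s^+\cong\mathcal T^+$, and a short computation with $j=i+A(x)$ shows that $A_s^+\cong\mathcal T^+$ with $v_s$ an isomorphism for $s\ge 1$, and symmetrically $h_s$ an isomorphism for $s\le -1$. Cancelling these reduces the complex to $A_0^+$ together with the two tower maps $v_0,h_0:A_0^+\to B_0^+\oplus B_1^+$. The homology then splits as one tower $\mathcal T^+$, placed in Maslov grading $0$ if $\tau(D_+(K,n))=0$ and $-2$ if $\tau(D_+(K,n))=1$ (consistent with $d(S^3_{+1}(K'))=-2V_0$), plus a reduced part carried by the reduced homology of $A_0^+$. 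Reading off the reduced homology of $A_0^+$ from the knot Floer data above, it splits as a single-grading term of rank $n-2\tau(K)$ (resp.\ $2\tau(K)-n-1$) in grading $(-1)$ (resp.\ $(-2)$), together with two copies of $\bigoplus_{i=-g}^{g}H_{*+1}(\widetilde{\mathcal F}(K,i))$, the doubling reflecting the symmetric Alexander gradings $\pm1$ of the double. Specializing to $K=T_{2,3}$ should reproduce Theorem \ref{main} as a check.

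The hard part will be the map-level and grading bookkeeping. Knowing $\widehat{HFK}(D_+(K,n))$ and $\tau$ alone does not determine the cone; one must pin down enough of the full filtered type of $CFK^\infty(D_+(K,n))$—in particular the behaviour of $v_0,h_0$ on the reduced summands—to be sure the reduced groups survive the cone differential rather than cancel against the towers, and to track the absolute $\mathbb{Z}$-grading shifts so that the surviving groups land precisely in gradings $(-1),(-2),(-3)$ and the tower in $(0)$ or $(-2)$.
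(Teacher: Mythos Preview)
Your approach is valid but takes a different route from the paper. The paper does not compute $H_*(A_0^+)$ directly as an $\mathbb{F}[U]$-module. Instead it imports Hedden's explicit formula for $\widehat{HF}(S^3_1(D_+(K,n)))$ as a graded group, analyzes $C\{\max(i,j)\ge 0\}$ only enough to locate the bottom of the tower (hence $d$), and then passes from $\widehat{HF}$ to $HF^+$ via the long exact sequence $\widehat{HF}\to HF^+\xrightarrow{U} HF^+$. The residual ambiguity in the $U$-module structure (whether certain $\widehat{HF}$ summands pair into length-two towers) is resolved by comparing Euler characteristics with the Casson invariant $\lambda(M_n(K))=-n$. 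Your route via the large surgery identification $HF^+(S^3_1(K'))\cong H_*(A_0^+)$, valid since $1\ge 2g(D_+(K,n))-1$, is more direct and dispenses with the Casson invariant, but in exchange you must extract the full $\mathbb{F}[U]$-module structure of $H_*(A_0^+)$ from Hedden's filtered model; the paper's detour through $\widehat{HF}$ and $\lambda$ is precisely a device to avoid that bookkeeping.

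One small correction: after your cancellation step, nothing survives beyond $A_0^+$. The large surgery formula gives $HF^+\cong H_*(A_0^+)$ outright, with no residual $B_0^+\oplus B_1^+$ or maps $v_0,h_0$ to account for; so the tower and the reduced part are both read off from $H_*(A_0^+)$ alone. Also note that the final rewriting in terms of $\widetilde{\mathcal F}(K,i)$ is a separate algebraic step in the paper: one first obtains $\mathbb{F}_{(-1)}^{n-2g-2}\oplus\bigoplus_i H_{*+1}(\mathcal F(K,i))^2$ (with the unreduced filtration), and then absorbs the negative exponent by observing that $H_*(\mathcal F(K,i))\cong H_*(\widetilde{\mathcal F}(K,i))\oplus\mathbb{F}_{(0)}$ for $i\ge\tau(K)$ and $H_*(\mathcal F(K,i))\cong H_*(\widetilde{\mathcal F}(K,i))$ for $i<\tau(K)$.
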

The reduced knot filtration $\widetilde{{\mathcal F}}(K,i)$ will be defined in the Section~\ref{genered}.
$\widetilde{\mathcal F}(K,i)$ is a sub-filtration of the knot filtration

This theorem means that from Theorem~\ref{Hedd} we have
$$\tau(D_+(K,n))=-2d(S^3_1(D_+(K,n)))=-2d(M_n(K)).$$

In the same way as the one of the case of $M_n(T_{2,3},K)$ one can also compute the Heegaard Floer homology of $M_n(F,K)=S^3_{-1}(D_+(K,n))$.
Here we state the correction term formula of $M_n(F,K)$.
\begin{thm}
\label{fig8}
Let $K$ be a knot in $S^3$.
Then we have
$$d(M_n(F,K))=0.$$
\end{thm}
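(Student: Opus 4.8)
The plan is to compute $d(M_n(F,K))$ by running Ozsv\'ath--Szab\'o's integer surgery (mapping cone) formula for framing $-1$ on the very same knot $D_+(K,n)$ whose filtered complex was already analyzed for Theorem~\ref{genmain}. Recall the correction-term consequence of the mapping cone: for any knot $J$ one has $d(S^3_{+1}(J))=-2V_0(J)$ and $d(S^3_{-1}(J))=2V_0(m(J))$, where $m(J)$ is the mirror and $V_0(J)\ge 0$ is the $U$-exponent of the vertical map $v_0\colon A_0^+\to B^+={\mathcal T}^+$ on the tower. This is consistent with Theorem~\ref{genmain}, where the $+1$-surgery tower sits in grading $0$ or $-2$, i.e. $V_0(D_+(K,n))=0$ or $1$ according as $\tau(D_+(K,n))=0$ or $1$. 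Reversing orientation gives $-M_n(F,K)=S^3_{+1}(m(D_+(K,n)))$, and the mirror of a positive-clasped double is the negative-clasped double $m(D_+(K,n))=D_-(m(K),-n)$; hence $d(M_n(F,K))=-d(S^3_{+1}(D_-(m(K),-n)))=2V_0(D_-(m(K),-n))$, and it suffices to prove that this $V_0$ vanishes for every $n$.

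The main step is therefore to show $V_0(D_-(m(K),-n))=0$. The knot $D_-(m(K),-n)$ again has Seifert genus one, and Theorem~\ref{Hedd} applied to the mirror gives $\tau(D_-(m(K),-n))=-\tau(D_+(K,n))\le 0$. I would argue that the tower inside $A_0^+$ is carried by the staircase summand of $CFK^\infty(D_-(m(K),-n))$, whose length is governed by $\tau\le 0$ and which therefore forces $V_0=0$; the remaining summands are the acyclic \emph{box} pieces that produce exactly the $H_{\ast+1}(\widetilde{\mathcal F}(K,i))^2$ contributions seen in Theorem~\ref{genmain}, and these do not meet the tower and so cannot raise $V_0$. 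Concretely this is the same filtered complex used for Theorem~\ref{genmain}, read after mirroring, which interchanges the roles of the vertical and horizontal differentials; the vanishing of $V_0$ here is the mirror statement of the vanishing of the horizontal invariant $H_0(D_+(K,n))$.

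Combining the two steps yields $d(M_n(F,K))=2\cdot 0=0$ uniformly in $n$, which is why, unlike the $+1$-surgery answer of Theorem~\ref{genmain}, there is no case distinction: for framing $-1$ it is the horizontal map, not $v_0$, that governs the tower, and $H_0(D_+(K,n))=0$ for all $n$. I expect the genuine obstacle to be the second paragraph, namely certifying $V_0=0$ exactly rather than merely $0\le V_0\le 1$ as forced by the genus-one and $\tau\le 0$ inequalities: one must rule out a genus-one complex in which a box summand interacts with the staircase and contributes to the cokernel of $v_0$ on the tower. I would close this by extracting the precise filtered chain-homotopy type of $D_+(K,n)$ from Hedden's computation of $\widehat{HFK}(D_+(K,n))$ and checking directly that no box meets the bottom of the tower.
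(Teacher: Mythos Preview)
Your reduction via mirroring is correct: $d(M_n(F,K))=2V_0\bigl(m(D_+(K,n))\bigr)$.  However, the identification you lean on in the last two paragraphs is wrong.  By the $i\leftrightarrow j$ symmetry of $CFK^\infty$ one has $H_0(J)=V_{0}(J)$ for every knot $J$; there is no general relation of the form ``$V_0(m(J))=H_0(J)$.''  In particular your assertion that $H_0(D_+(K,n))=0$ for all $n$ is false: by Theorem~\ref{genmain} one has $V_0(D_+(K,n))=-\tfrac12 d(M_n(K))=1$ whenever $n<2\tau(K)$, hence $H_0(D_+(K,n))=1$ in that range.  So the sentence ``for framing $-1$ it is the horizontal map \dots\ and $H_0(D_+(K,n))=0$'' does not prove anything, and the gap you flag in your second paragraph is genuine and unresolved.

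The paper avoids this detour entirely.  Since $D_+(K,n)$ has Seifert genus one, $-1$-surgery is already ``large,'' and $HF^+(S^3_{-1}(D_+(K,n)))$ is computed by the quotient complex $C\{i\ge 0\text{ and }j\ge 0\}$ of $C=CFK^\infty(D_+(K,n))$.  From Hedden's description of the filtered complex (the same input used for Theorem~\ref{genmain}), the cycle $x_0\in C\{i=0\}$ representing the generator of $\widehat{HF}(S^3)$ sits at $(0,0)$ when $n\ge 2\tau(K)$ and at $(0,1)$ when $n<2\tau(K)$.  Either way $x_0$ lies in the region $\{i\ge 0,\ j\ge 0\}$, so it survives in the quotient and is the bottom of the tower there; since $\mathrm{gr}(x_0)=0$, one reads off $d(M_n(F,K))=0$ immediately, with no case distinction in the answer.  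If you want to salvage your route, the computation of $V_0$ for the mirror would have to be done by exactly this filtered-complex analysis anyway, so the mirror step buys nothing.
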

From Theorem~\ref{genmain}, the following holds naturally:
\begin{cor}
If $n<2\tau(K)$, then $M_n(K)$ bounds no negative-definite 4-manifold.

In particular, if $n<2\tau(K)$, then $M_n(K)$ bounds no contractible 4-manifold, hence, $D_+(K,n)$
is not slice.
\end{cor}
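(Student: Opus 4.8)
The plan is to read off the correction term of $M_n(K)$ from Theorem~\ref{genmain} and then feed it into Ozsv\'ath--Szab\'o's obstruction to bounding negative-definite $4$-manifolds. When $n<2\tau(K)$, Theorem~\ref{genmain} yields
$$HF^+(M_n(K))\cong {\mathcal T}^+_{(-2)}\oplus HF_{\text{\normalfont red}}(M_n(K)),$$
and since the correction term is by definition the grading of the bottom generator of the tower ${\mathcal T}^+$, I would immediately conclude $d(M_n(K))=-2$, in particular $d(M_n(K))<0$. Here $M_n(K)=S^3_1(D_+(K,n))$ is an integral homology sphere, so $d$ is defined for its unique spin$^c$ structure and this reading is unambiguous.

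Next I would invoke the correction-term inequality: if an integral homology sphere $Y$ bounds a negative-definite $4$-manifold $W$, then for every spin$^c$ structure $s$ on $W$ one has $c_1(s)^2+b_2^-(W)\le 4\,d(Y)$, where $c_1(s)^2$ is computed in the intersection form of $W$, which is unimodular because $H_1(Y)=0$. To extract a clean obstruction I would choose $s$ whose first Chern class is a characteristic vector $c$ with $c^2\ge -b_2^-(W)$; such a vector exists by Elkies' theorem on negative-definite unimodular lattices. For this choice the left-hand side is non-negative, whence $d(Y)\ge 0$. Applying this with $Y=M_n(K)$ and comparing against $d(M_n(K))=-2<0$ produces a contradiction, which proves that $M_n(K)$ bounds no negative-definite $4$-manifold.

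The two remaining assertions are then formal. A contractible $4$-manifold has $H_2=0$, so its intersection form is the (vacuously negative-definite) zero form; hence a contractible bound would in particular be a negative-definite bound, which has just been excluded. For the last clause I would argue by contraposition: if $D_+(K,n)$ were slice, then capping the slice-disk exterior in $B^4$ (whose boundary is $S^3_0(D_+(K,n))$) with a single, suitably framed $2$-handle attached along a meridian --- exactly the construction recalled in the introduction --- produces a $4$-manifold with $\pi_1=e$ and $H_2=H_3=0$, i.e.\ a contractible bound of $S^3_1(D_+(K,n))=M_n(K)$. This contradicts the previous sentence, so $D_+(K,n)$ is not slice.

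Essentially all of the mathematical substance already lives in Theorem~\ref{genmain}: once $d(M_n(K))=-2$ is in hand, the corollary is a formal application of a standard obstruction, consistent with the author flagging it as following ``naturally.'' If any step deserves care it is the lattice-theoretic one --- ensuring via Elkies' theorem that a characteristic vector with $c^2\ge -b_2^-(W)$ exists for an \emph{arbitrary} negative-definite intersection form, not merely a diagonalizable one, so that the conclusion $d\ge 0$ is valid for every negative-definite bound. I expect this to be the only point that is not entirely routine.
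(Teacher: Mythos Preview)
Your proposal is correct and follows essentially the same approach as the paper: read off $d(M_n(K))=-2$ from Theorem~\ref{genmain}, apply Ozsv\'ath--Szab\'o's obstruction that $d(Y)\ge 0$ is necessary for a negative-definite bound of a homology sphere, and deduce the remaining statements formally. The paper simply cites \cite{OS1} for the obstruction, whereas you spell out the underlying mechanism (the inequality $c_1(s)^2+b_2^-(W)\le 4d(Y)$ together with Elkies' theorem); this is exactly how the result in \cite{OS1} is proved, so there is no substantive difference.
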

\begin{proof}
Since in the case of $n<2\tau(K)$, the correction term $d(M_n(K))$ is a negative integer.
The non-negativity of $d(Y^3)$ for a homology 3-sphere $Y^3$ is a necessary condition to have
a negative definite bound for $Y^3$ (see \cite{OS1}).
Hence, in particular, $M_n(K)$ has no contractible bound.
Therefore $D_+(K,n)$ is not slice.
\qed
\end{proof}
The following question is to be solved.
\begin{que}
Let $n$ be an integer with $n\ge 2\tau(K)$.
Does $M_n(K)$ bound a contractible 4-manifold?

For any integer $n$, does $M_n(F,K)$ bound a contractible 4-manifold?
\end{que}
By using the Casson invariant computation (\ref{casson}) of $M_n(K)$, we can give a formula of $\tau(K)$ as the whole sum of Euler numbers of the reduced filtration of $K$.
\begin{cor}
\label{filtEul}
Let $K$ be a knot in $S^3$ with genus $g$.
Then the $\tau$-invariant of $K$ is computed by the following formula:
$$\tau(K)=\sum_{i=-g}^g\chi(\widetilde{{\mathcal F}}(K,i)),$$
where $\widetilde{{\mathcal F}}(K,i)$ is the reduced knot filtration of $K$.
\end{cor}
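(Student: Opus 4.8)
The plan is to feed the Heegaard Floer computation of Theorem~\ref{genmain} and the Casson invariant computation (\ref{casson}) into Ozsv\'ath--Szab\'o's formula relating the Casson invariant of an integral homology sphere $Y$ to its Heegaard Floer homology,
$$\lambda(Y)=\chi(HF_{\text{red}}(Y))-\tfrac12 d(Y).$$
I would apply this identity to $Y=M_n(K)$, whose left-hand side already equals $-n$ by (\ref{casson}). The strategy is that the right-hand side can be evaluated entirely from Theorem~\ref{genmain}, so the identity becomes a linear relation in $n$ and in $\sum_{i}\chi(\widetilde{\mathcal F}(K,i))$ from which $\tau(K)$ can be isolated.

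Next I would evaluate the right-hand side using Theorem~\ref{genmain}. The correction term is read off from the bottom of the tower: $d(M_n(K))=0$ when $n\ge 2\tau(K)$ and $d(M_n(K))=-2$ when $n<2\tau(K)$. For the reduced part I would compute the Euler characteristic summand by summand, using that the grading shift $H_{\ast+1}$ reverses the sign of the Euler characteristic, so that each doubled term contributes $2\chi(H_{\ast+1}(\widetilde{\mathcal F}(K,i)))=-2\chi(\widetilde{\mathcal F}(K,i))$. In the range $n\ge2\tau(K)$ the free summand ${\Bbb F}^{n-2\tau(K)}_{(-1)}$ sits in odd grading and contributes $-(n-2\tau(K))$, giving
$$\chi(HF_{\text{red}}(M_n(K)))=2\tau(K)-n-2\sum_{i=-g}^g\chi(\widetilde{\mathcal F}(K,i));$$
in the range $n<2\tau(K)$ the free summand ${\Bbb F}^{2\tau(K)-n-1}_{(-2)}$ instead sits in even grading and contributes $+(2\tau(K)-n-1)$.

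Substituting into $\lambda(M_n(K))=\chi(HF_{\text{red}}(M_n(K)))-\tfrac12 d(M_n(K))$ with $\lambda(M_n(K))=-n$, the terms linear in $n$ cancel in both regimes; in the case $n<2\tau(K)$ the contribution $-\tfrac12 d=+1$ precisely cancels the extra $-1$ in the dimension of the free summand. In either case one is left with the single relation $2\tau(K)=2\sum_{i=-g}^g\chi(\widetilde{\mathcal F}(K,i))$, which is the claim. The only real obstacle is the sign bookkeeping: one must correctly fix the sign in the Casson formula, the grading reversal under $H_{\ast+1}$, and the parities of the gradings $-1$ and $-2$ carrying the free summands. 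The fact that the two ranges $n\ge 2\tau(K)$ and $n<2\tau(K)$ independently yield the \emph{same} identity serves as an internal consistency check that these conventions have been fixed correctly.
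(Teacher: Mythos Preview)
Your proposal is correct and follows essentially the same approach as the paper: combine the Casson invariant computation $\lambda(M_n(K))=-n$ from (\ref{casson}) with the description of $HF_{\text{red}}(M_n(K))$ and $d(M_n(K))$ in Theorem~\ref{genmain}, and solve the resulting identity for $\tau(K)$. The only cosmetic difference is that you make the Ozsv\'ath--Szab\'o relation $\lambda(Y)=\chi(HF_{\text{red}}(Y))-\tfrac12 d(Y)$ explicit (so the $-\tfrac12 d=+1$ correction in the $n<2\tau(K)$ case is visible), whereas the paper writes the equality directly for $n\ge 2\tau(K)$, where $d=0$ and the correction term vanishes, and then appeals to ``the same argument'' for the other range.
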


\subsection{Rational 4-ball bound-ness of $\Sigma_2(D_+(K,n))$ and the $\delta$-invariant.}
\label{ra4ball}
The next purpose of this paper is to consider the rational 4-ball bound-ness of the double branched cover $\Sigma_2(K')$ of a knot $K'$.
This is related to the sliceness of $K'$.
The following always holds:
\begin{center}
$K'$ is slice $\Rightarrow$ $\Sigma_2(K')$ bounds a rational 4-ball.
\end{center}
A rational 4-ball $B$ is a 4-manifold with $H_\ast(B,{\Bbb Q})\cong H_\ast(B^4,{\Bbb Q})$, where $B^4$ is 
the standard 4-ball.
The $\delta$-invariant by Manolescu and Owens is a related invariant to this relationship.
The invariant is defined to be
$$\delta(K')=2d(\Sigma_2(K'),\frak{c}_0),$$
where $\frak{c}_0$ is the canonical Spin$^c$ structure for the unique spin structure on $\Sigma_2(K')$.
The $\delta$-invariant gives a homomorphism
$$\delta:{\mathcal C}_{\text{sm}}\to {\Bbb Z}.$$
Thus, we have the following
\begin{center}$\Sigma_2(K')$ bounds a rational 4-ball$\Rightarrow$ $\delta(K')=0$.\end{center}

They also computed the $\delta$-invariant for the untwisted Whitehead double of any knot or any alternating knot.
\begin{thm}[\cite{MO}]
For any knot $K$ we have $\delta(D_+(K,0))\le 0$ and inequality is strict, if $\tau(K)>0$.
If $K$ is alternating, then $\delta(D_+(K,0))=-4\max\{\tau(K),0\}$.
\end{thm}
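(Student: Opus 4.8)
The plan is to reduce the computation of $\delta(D_+(K,0))=2d(\Sigma_2(D_+(K,0)))$ to a surgery computation on the connected sum $J:=K\#K^r$, where $K^r$ denotes the reverse of $K$. First I would identify the double branched cover explicitly. The untwisted positive-clasped Whitehead double is unknotted by a single crossing change inside its clasp: undoing one clasp crossing separates $K$ from its $0$-framed pushoff and yields the unknot. By the Montesinos trick, such a crossing change corresponds to a half-integer surgery on the lift of the crossing arc in the double branched cover of the resulting unknot, which is $S^3$. Because the Whitehead pattern has winding number zero, this lift traverses the companion region twice and closes up to $K\#K^r$. Thus I expect $\Sigma_2(D_+(K,0))\cong S^3_{1/2}(K\#K^r)$, an integral homology sphere (consistent with $\det D_+(K,0)=1$, so that the canonical spin$^{c}$ structure is the unique one), with the coefficient $+\tfrac12$ fixed by the positivity of the clasp.

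Granting this identification, the second step is to apply the Ozsv\'ath--Szab\'o rational surgery formula to $S^3_{1/2}(J)$. The correction term of $\tfrac1n$-surgery on a knot is governed by its nonnegative local $h$-invariant $V_0(J)$, giving $d(S^3_{1/2}(J))=-2V_0(J)$. Hence
$$\delta(D_+(K,0))=2d(\Sigma_2(D_+(K,0)))=-4V_0(J)\le 0,$$
which is the first assertion. For strictness I note that $\tau$ is additive and invariant under reversal, so $\tau(J)=\tau(K)+\tau(K^r)=2\tau(K)$; thus $\tau(J)>0$ whenever $\tau(K)>0$. Combined with the standard property that $V_0(J)>0$ whenever $\tau(J)>0$ (via $V_0(J)=0\iff\nu^+(J)=0$ and the comparison of $\nu^+$ with $\tau$), this forces $\delta(D_+(K,0))<0$.

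For the final assertion I would specialize to alternating $K$. Since the reverse of an alternating knot is alternating and a connected sum of alternating knots is alternating, $J=K\#K^r$ is alternating, hence thin: its knot Floer homology is supported on a single diagonal and is determined by the Alexander polynomial together with $\tau(J)=-\sigma(J)/2$. For thin knots the local invariants are explicit, with $V_0(J)=\lceil\tau(J)/2\rceil$ when $\tau(J)\ge 0$ and $V_0(J)=0$ otherwise. As $\tau(J)=2\tau(K)$ is even, $V_0(J)=\max\{\tau(K),0\}$, and therefore $\delta(D_+(K,0))=-4V_0(J)=-4\max\{\tau(K),0\}$.

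The main obstacle is the first step: establishing the precise surgery description $\Sigma_2(D_+(K,0))\cong S^3_{1/2}(K\#K^r)$, including the exact half-integer coefficient and the appearance of the \emph{reverse} companion $K^r$ rather than $K$ or its mirror. Tracking orientations through the covering involution and pinning down the framing contributed by the clasp is delicate, and a sign error there would propagate a wrong factor into every subsequent step. Once the surgery presentation is secured, the remaining steps are routine applications of the rational surgery formula and of the known structure of the local invariants $V_s$ for thin knots.
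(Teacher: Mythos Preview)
The paper does not prove this statement; it is quoted from Manolescu--Owens \cite{MO} as background, so there is no ``paper's own proof'' to compare against.

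That said, your argument is correct and is precisely the method the present paper uses for its own $\delta$-computations. The identification $\Sigma_2(D_+(K,n))=S^3_{(4n+1)/2}(K\#K^r)$ is exactly Proposition~\ref{4n+12KK} (so for $n=0$ one gets $S^3_{1/2}(K\#K^r)$, resolving what you flag as the main obstacle), and the paper then applies the Ni--Wu formula $d(S^3_{p/q}(J),i_0)=d(L(p,q),i_0)-2V_{\lfloor i_0/q\rfloor}$ just as you do. Your alternating computation $V_0(K\#K^r)=\lceil\tau(K\#K^r)/2\rceil=\tau(K)$ agrees with the paper's explicit table $V_{2,p,0}=p$ for $K=T_{2,2p+1}$, and the strictness step (that $\tau(J)>0$ forces $V_0(J)>0$) is the standard inequality $\tau\le\nu^+$ combined with $\nu^+(J)=0\Leftrightarrow V_0(J)=0$. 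In short, your outline recovers the cited theorem by the same machinery the paper develops for Theorem~\ref{deltainvariant} and Corollary~\ref{deltazerocor}.
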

Hence, the $\delta$-invariant of the untwisted Whitehead double of $T_{2,2p+1}$ is as follows:
$$\delta(D_+(T_{2,2p+1},0))=-4p.$$

Lisca in \cite{Li} proved that when $K'$ is a 2-bridge knot, conversely if $\Sigma_2(K')$ bounds a rational 4-ball,
then $K'$ is slice.
Namely, she proved the following theorem:
\begin{thm}[\cite{Li}]
Suppose that $K'$ is a 2-bridge knot.
Then we have
\begin{center}
$K'$ is slice $\Leftrightarrow$ $\Sigma_2(K')$ bounds a rational 4-ball.
\end{center}
\end{thm}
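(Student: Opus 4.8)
The plan is to prove the two implications separately, noting that the forward direction is the general fact already recorded in the text, while the converse is where the $2$-bridge hypothesis is essential. First I would dispatch ($\Rightarrow$): if $K'$ is slice, bounding a smoothly embedded disk $D\subset B^4$, then the double branched cover $W=\Sigma_2(B^4,D)$ of the $4$-ball along $D$ is a smooth $4$-manifold with $\partial W=\Sigma_2(K')$, and a standard computation with the transfer homomorphism shows $H_\ast(W;{\Bbb Q})\cong H_\ast(B^4;{\Bbb Q})$, so $W$ is a rational homology $4$-ball. This uses nothing about the bridge number and is exactly the implication quoted before the statement.

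For the converse ($\Leftarrow$) I would first translate everything into lens-space language. Write $K'=K(p,q)$ as the $2$-bridge knot associated to $p/q$ with $0<q<p$ and $\gcd(p,q)=1$; then $\Sigma_2(K(p,q))=L(p,q)$. So the problem becomes: classify those $p/q$ for which $L(p,q)$ bounds a rational homology ball, and then verify that this list coincides with the $p/q$ giving slice $2$-bridge knots. The engine is Donaldson's diagonalization theorem. Expand $p/q=[a_1,\dots,a_k]^-$ as a negative continued fraction with each $a_i\ge 2$, and let $X$ be the associated linear plumbing, a negative-definite $4$-manifold with $\partial X=-L(p,q)$ whose intersection form is the tridiagonal lattice $\Lambda(p/q)$ with diagonal $-a_i$ and off-diagonal $1$. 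If $L(p,q)$ bounds a rational homology ball $B$, then $Z=X\cup_{\partial}(-B)$ is a closed negative-definite $4$-manifold; since $B$ is a ${\Bbb Q}$-homology ball the rank of $H_2$ is unchanged, so Donaldson's theorem forces $\Lambda(p/q)$ to embed as a full-rank sublattice of the standard diagonal lattice $\langle-1\rangle^{\oplus k}=\Z^k$.

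The heart of the argument, and the step I expect to be the main obstacle, is the purely combinatorial classification of all isometric embeddings $\Lambda(p/q)\hookrightarrow \Z^k$. One writes each basis vector of the chain as a signed combination of standard basis vectors subject to the length-squared constraint ($=a_i$) and the consecutive-inner-product constraint ($=1$), and then tracks how the supports of successive vectors overlap along the chain. A careful induction on $k$ should show that the admissible continued fractions form an explicit family, stable under the standard moves $p/q\leftrightarrow p/(p-q)$ and the continued-fraction subtraction moves; this delicate case analysis is where Lisca's technical work is concentrated. Finally I would match this family against the known classification of slice (equivalently, ribbon) $2$-bridge knots due to Casson and Gordon: the two lists agree, so $L(p,q)$ bounds a rational homology ball precisely when $K(p,q)$ is slice, which completes ($\Leftarrow$) and with it the theorem.
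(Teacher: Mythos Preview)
The paper does not give its own proof of this theorem; it is quoted from \cite{Li} as a known result and used only as background. Your outline is in fact a reasonable sketch of Lisca's actual argument: the forward implication is the general fact, and for the converse one glues the linear plumbing bounding $-L(p,q)$ to the putative rational ball, applies Donaldson's diagonalization to force an embedding of the chain lattice into the standard diagonal lattice, and then carries out a combinatorial classification of such embeddings.

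One correction to your final step: the proof is not completed by matching against a prior Casson--Gordon classification of slice $2$-bridge knots (no such complete list was available). Rather, Lisca produces an explicit finite family of admissible $p/q$ from the lattice analysis and then, for each member of that family, directly exhibits a ribbon disk for the corresponding $2$-bridge knot via band moves. This simultaneously establishes the converse and the slice-ribbon conjecture for $2$-bridge knots.
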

How about the case where $K'$ is the $n$-twisted Whitehead double of a knot $K$?
\begin{que}
\label{rationalballsliceconj}
Suppose that $K'$ is the $n$-twisted Whitehead double.
Then, does the following equivalence
\begin{center}$K'$ is slice $\Leftrightarrow$ $\Sigma_2(K')$ bounds a rational 4-ball.\end{center}
hold?
\end{que}
We give a negative answer for Question~\ref{rationalballsliceconj}
\begin{thm}
\label{2712}
Let $K'$ be $D_+(T_{2,7},12)$ or $D_+(T_{3,7},20)$.
Then $K'$ is not slice but $\Sigma_2(K')$ bounds a rational 4-ball.
\end{thm}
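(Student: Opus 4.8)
The plan is to treat the two assertions separately, since for both knots the standard invariants are silent and everything must be extracted from the double branched cover. First I would record why the easy tools fail. We have $\tau(T_{2,7})=3$ and $\tau(T_{3,7})=6$, so $12\ge 2\tau(T_{2,7})$ and $20\ge 2\tau(T_{3,7})$, whence Theorem~\ref{Hedd} gives $\tau(K')=0$ in both cases. Moreover the symmetrized Seifert form of the genus-one knot $K'=D_+(K,n)$ is $\left(\begin{smallmatrix}-2&1\\1&2n\end{smallmatrix}\right)$, whose associated quadratic form is $-2(a^2-ab-nb^2)$, factoring as $-2(a-4b)(a+3b)$ for $n=12$ and $-2(a-5b)(a+4b)$ for $n=20$. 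Thus $K'$ is algebraically slice and all Tristram--Levine signatures vanish, so no classical concordance invariant can see non-sliceness. I would also note $\det(K')=4n+1$ equals $49=7^2$ and $81=9^2$, so the square test necessary for a rational-ball bound is met. Finally I would identify the cover: taking the two generators of the Seifert surface to be the clasp curve (an unknot, a meridian of the companion, with framing $-2$) and the companion curve (a copy of $K$, with framing $2n$), the framing matrix is the symmetrized Seifert form; a slam-dunk of the meridian then yields $\Sigma_2(K')=S^3_{(4n+1)/2}(K)$, a half-integer surgery on the companion torus knot with $H_1=\mathbb{Z}/(4n+1)$.

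For the rational-ball direction I would exploit the metabolicity above. The class $v=(4,1)$ for $n=12$ (resp.\ $(5,1)$ for $n=20$) is primitive and isotropic, so pushing the Seifert surface into $B^4$ and passing to the double branched cover produces a $4$-manifold $X$ with $\partial X=\Sigma_2(K')$ and intersection form $\left(\begin{smallmatrix}-2&1\\1&2n\end{smallmatrix}\right)$; surgering $X$ along an embedded sphere representing $v$ kills $H_2$ and leaves only torsion $H_1=\mathbb{Z}/7$ (resp.\ $\mathbb{Z}/9$), yielding a rational homology $4$-ball. The step needing care is the geometric realization of $v$ by a sphere of square zero, equivalently checking that the metabolizing curve $4\alpha+\beta$ (resp.\ $5\alpha+\beta$) bounds a suitable disk so that its lift caps off in $X$; failing a clean realization one can instead give an explicit Kirby-calculus description of $S^3_{(4n+1)/2}(K)$ as the boundary of a rational ball. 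As a cross-check I would compute the correction terms and verify the Ozsv\'ath--Szab\'o metabolizer condition, namely $d(\Sigma_2(K'),\frak{s})=0$ on the unique order-$\sqrt{4n+1}$ subgroup of $H_1$, and in particular $\delta(K')=2d(\Sigma_2(K'),\frak{c}_0)=0$.

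For non-sliceness I would compute the full collection of correction terms $d(S^3_{(4n+1)/2}(K),\frak{s})$ over $\mathrm{Spin}^c$ from the Ozsv\'ath--Szab\'o rational surgery formula together with the knot Floer homology of the torus knot $K$, and then apply the obstruction of Owens and Strle. The conceptual crux, and the reason these realize a genuine gap, is that the rational-ball conclusion of the previous paragraph is a statement about the three-manifold $\Sigma_2(K')$ alone: it is detected by $\delta(K')=0$ and the vanishing of $d$ on the metabolizer, and a slice disk produces exactly the same three-manifold data. Because $H_1$ is cyclic of odd square order, the metabolizer is unique and the extending $\mathrm{Spin}^c$ coset is forced (by conjugation-invariance) to contain the spin structure, so this three-manifold information cannot by itself distinguish rational-ball bound-ness from sliceness. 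What is needed is an obstruction sensitive to more than the untwisted correction terms of $\Sigma_2(K')$, and this is the role played by Owens and Strle's obstruction; I would feed it the computed data and show that the sliceness inequality it imposes is violated while the rational-ball criterion remains satisfied.

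The step I expect to be the main obstacle is precisely this reconciliation. The surgery-formula computation of $d(S^3_{(4n+1)/2}(K),\frak{s})$ for $K=T_{2,7}$ and $T_{3,7}$ is itself delicate, since one must track the $\mathrm{Spin}^c$ labelling modulo $4n+1$ and the affine grading shifts of a half-integer surgery on a knot of genus $3$ and $6$; but the deeper difficulty is to produce one correction-term computation that simultaneously clears the rational-ball bar of the second paragraph and fails the Owens--Strle sliceness bar of the third. Exhibiting this numerical gap, rather than the vanishing or non-vanishing of any single invariant, is what makes $D_+(T_{2,7},12)$ and $D_+(T_{3,7},20)$ the first examples of the desired kind.
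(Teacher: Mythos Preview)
Your proposal contains a fatal conceptual error in the non-sliceness half. You correctly observe that any invariant of the $3$-manifold $\Sigma_2(K')$ alone cannot separate ``$K'$ is slice'' from ``$\Sigma_2(K')$ bounds a rational $4$-ball'', since sliceness produces exactly such a rational ball. But you then propose to use Owens and Strle's obstruction for non-sliceness. That obstruction (Proposition~\ref{OW}) is precisely a constraint on the correction terms of $\Sigma_2(K')$ imposed by the existence of \emph{any} rational $4$-ball bound; it is a $3$-manifold invariant and nothing more. Once you have established in your second paragraph that $\Sigma_2(K')$ bounds a rational ball, the Owens--Strle inequalities are automatically satisfied and give no further information. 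There is no ``sliceness bar'' in Owens--Strle that is stricter than the ``rational-ball bar''. The paper instead invokes Collins' result \cite{JC}, which uses twisted Alexander polynomials --- an obstruction that genuinely lives on the knot and not merely on $\Sigma_2(K')$.

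There is also a computational slip: your identification $\Sigma_2(D_+(K,n))=S^3_{(4n+1)/2}(K)$ is wrong. The Montesinos trick gives surgery on $K\#K^r$, not on $K$; see Proposition~\ref{4n+12KK}. For the torus knots in question this is $S^3_{49/2}(T_{2,7}\#T_{2,7})$ and $S^3_{81/2}(T_{3,7}\#T_{3,7})$.

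Finally, your rational-ball construction via surgering an isotropic sphere is only an outline: producing an embedded sphere representing the metabolizer class is exactly the hard geometric step, and you give no mechanism for it. The paper bypasses this by explicit Kirby calculus, transforming each surgery description into a $0$-framed slice knot ($10_{155}$ in the first case) linked with a $(-2)$-framed unknot, so that the slice-disk complement together with the $(-2)$-handle is visibly a rational $4$-ball.
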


We compute the values of $\delta$ in the cases of $K'=D_+(T_{2,2p+1},n)$ and $D_+(T_{3,3p+1},n)$.
\begin{thm}
\label{deltainvariant}
Let $n$ be a non-negative integer and $p$ a positive integer.
Then we have 
$$\delta(D_+(T_{2,2p+1},n))=-4\max\left\{p-\left\lfloor\frac{n}{2}\right\rfloor,0\right\}$$
and we have 
$$\delta(D_+(T_{3,3p+1},n))=-4\max\left\{2p-\left\lfloor\frac{n}{3}\right\rfloor,0\right\}$$
\end{thm}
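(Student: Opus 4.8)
The plan is to reduce the computation of $\delta$ to a single stable value in the knot Floer homology of $K\#K^{r}$, by realizing the double branched cover as a rational surgery and invoking the Ni--Wu correction term formula.

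\emph{Identifying the double branched cover.} The knot $D_{+}(K,n)$ bounds a standard genus-one Seifert surface whose symmetrized Seifert form is
$$V+V^{T}=\begin{pmatrix}-2 & 1\\ 1 & 2n\end{pmatrix},$$
so $\Sigma_{2}(D_{+}(K,n))$ is surgery on a two-component link: an unknotted clasp circle of framing $-2$ together with the companion-lift of framing $2n$, meeting once, where the doubled companion lifts to $K\#K^{r}$ (and $K^{r}=K$ since torus knots are invertible). Slam-dunking the unknotted meridian would then give
$$\Sigma_{2}(D_{+}(K,n))\cong S^{3}_{(4n+1)/2}(K\#K^{r}),$$
consistent with $|H_{1}|=\det(D_{+}(K,n))=4n+1$. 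I would fix orientations so that the slope is positive (which forces $\delta\le 0$, as it must be) and identify $\frak{c}_{0}$ with the unique self-conjugate $\mathrm{Spin}^{c}$ structure, sitting in the middle of the surgery labelling.

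\emph{The correction term formula.} Applying the Ni--Wu formula at the self-conjugate structure, where the two terms of the governing maximum coincide by conjugation symmetry, yields
$$d\big(\Sigma_{2}(D_{+}(K,n)),\frak{c}_{0}\big)=d\big(L(4n+1,2),\frak{c}_{0}\big)-2V_{n}(K\#K^{r}),$$
with $V_{n}$ the local $h$-invariant read off from $CFK^{\infty}(K\#K^{r})$. The lens-space term vanishes: $L(4n+1,2)$ is the double branched cover of the two-bridge knot $\frak{b}(4n+1,2)$, which is the even twist knot of determinant $4n+1$, an alternating knot of vanishing signature, so its $\delta$-invariant is $0$ and hence $d(L(4n+1,2),\frak{c}_{0})=0$ (equivalently one checks this from the Ozsv\'ath--Szab\'o lens-space recursion). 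Therefore $\delta(D_{+}(K,n))=-4V_{n}(K\#K^{r})$, and the theorem becomes a knot Floer computation.

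\emph{Computing $V_{n}$.} Since $T_{2,2p+1}$ and $T_{3,3p+1}$ are $L$-space knots, their $CFK^{\infty}$ are explicit staircase complexes and $CFK^{\infty}(K\#K^{r})$ is the tensor square of the staircase. I would evaluate $V_{n}$ of this tensor square in closed form. The structural point is that $V_{j}$ drops by one each time $j$ increases by $2$ for $T_{2,2p+1}\#T_{2,2p+1}$ and by $3$ for $T_{3,3p+1}\#T_{3,3p+1}$, reflecting the width-$2$ (resp.\ width-$3$) step structure coming from the generator $2$ (resp.\ $3$) of the semigroup, while the initial values are $V_{0}(T_{2,2p+1}\#T_{2,2p+1})=p$ and $V_{0}(T_{3,3p+1}\#T_{3,3p+1})=2p$. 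These combine to
$$V_{n}(T_{2,2p+1}\#T_{2,2p+1})=\max\Big\{p-\Big\lfloor\tfrac{n}{2}\Big\rfloor,0\Big\},\qquad V_{n}(T_{3,3p+1}\#T_{3,3p+1})=\max\Big\{2p-\Big\lfloor\tfrac{n}{3}\Big\rfloor,0\Big\},$$
and multiplying by $-4$ gives the two stated formulas; at $n=0$ this recovers $-4p$ and $-8p$, the former matching the Manolescu--Owens value $-4\tau$ for the alternating companion.

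\emph{Main obstacle.} I expect the crux to be the last step: computing $V_{n}$ of the tensor square of a staircase and showing the arithmetic collapses exactly to the floor expressions $\lfloor n/2\rfloor$ and $\lfloor n/3\rfloor$ with the correct constants $p$ and $2p$. A secondary difficulty is making the surgery identification fully rigorous---pinning down the slope sign, verifying that the doubled companion lifts to $K\#K^{r}$, and correctly locating the self-conjugate $\mathrm{Spin}^{c}$ structure in the Ni--Wu labelling so that the relevant $V$-index is precisely $n$.
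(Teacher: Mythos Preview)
Your proposal is correct and follows essentially the same route as the paper: identify $\Sigma_2(D_+(K,n))\cong S^3_{(4n+1)/2}(K\#K^r)$, apply the Ni--Wu formula at the self-conjugate $\mathrm{Spin}^c$ structure to reduce $\delta$ to $-4V_n(K\#K^r)$ after checking $d(L(4n+1,2),\frak{c}_0)=0$, and then compute $V_n$ from the tensor square of the staircase complex. The paper carries out the $V_n$ step in more detail than you sketch---it works explicitly with the complex $C_{s,p}^{(2)}=C_{s,p}\otimes C_{s,p}$, identifies the diagonal of homological generators, and reads off $V_{s,p,k}$ via an injectivity criterion for the maps $\phi_{i,k}$, producing a full table of values---but your structural observation about step widths $2$ and $3$ is exactly what underlies that computation.
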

We define $t_s$, $t_\tau$, $t_\delta$ and $t_{d_1}$ as follows:
$$t_s(K)=\min\{t\in {\Bbb Z}|s(D_+(K,t))=0\}$$
$$t_\tau(K)=\min\{t\in {\Bbb Z}|\tau(D_+(K,t))=0\}$$
$$t_\delta(K)=\min\{t\in {\Bbb Z}|\delta(D_+(K,t))=0\}.$$
$$t_{d_1}(K)=\min\{t\in {\Bbb Z}|d(S^3_1(D_+(K,t)))=0\}.$$
Hedden in \cite{H} showed that $t_\tau(K)=2\tau(K)$ and our result says that $t_{d_1}(K)=2\tau(K)$.
Hedden and Ording conjectured $t_s(T_{2,2p+1})=3p-1$ in \cite{HO}.
Theorem~\ref{deltainvariant} shows that 
$$t_\delta(T_{2,2p+1})=t_\tau(T_{2,2p+1})=2p$$
and 
$$t_\delta(T_{3,3p+1})=t_\tau(T_{3,3p+1})=6p$$

We say $K$ to be a positive L-space knot, if the positive integral Dehn surgery of $K$ is an L-space.
\begin{thm}
\label{lspathm}
If $K$ is a positive torus knot (or in general, any positive L-space knot), then we have
$$t_{\delta}(K)=2\tau(K).$$
\end{thm}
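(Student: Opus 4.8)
The plan is to reduce the computation of $\delta$ to the correction term of a single rational surgery and then to exploit the L-space hypothesis only through the coincidence $\tau=g$. First I would use the surgery description of the double branched cover that underlies the computations of Theorem~\ref{deltainvariant}, namely
$$\Sigma_2(D_+(K,n))\cong S^3_{(4n+1)/2}(K\# K^r),$$
where $K^r$ denotes $K$ with reversed orientation, and under this identification the canonical spin$^c$ structure $\frak{c}_0$ corresponds to the unique self-conjugate spin$^c$ structure (the determinant $4n+1$ is odd, so the spin structure is unique). As a check on conventions, for $K=U$ this reads $\Sigma_2(D_+(U,n))=L(4n+1,2)$, the double branched cover of the $n$-twisted double of the unknot. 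Consequently $\delta(D_+(K,n))=2\,d(S^3_{(4n+1)/2}(K\#K^r),\frak{c}_0)$, and everything comes down to tracking this correction term as $n$ grows.

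Next I would apply the Ozsv\'ath--Szab\'o and Ni--Wu rational surgery formula. Writing the spin$^c$ structures on $S^3_{p/q}(J)$ with $p=4n+1$, $q=2$ as $\{0,\dots,p-1\}$, the self-conjugate structure is the midpoint label $i_0=2n+1$, at which the two floor/ceiling arguments of the maximum both equal $n$; thus the relevant index is $j_0(n)=n$ and
$$d(S^3_{(4n+1)/2}(K\#K^r),\frak{c}_0)=d(L(4n+1,2),\frak{c}_0)-2\,V_n(K\#K^r),$$
where the $V_i$ are the local $h$-invariants. The lens-space term I would dispose of as a base case: a direct application of the recursion for $d$-invariants of lens spaces (equivalently, the vanishing $\delta(D_+(U,n))=0$ for the unknot) gives $d(L(4n+1,2),\frak{c}_0)=0$ for every $n\ge 0$. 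Hence
$$\delta(D_+(K,n))=-4\,V_n(K\# K^r).$$

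It then remains to locate the vanishing of $V_n(K\#K^r)$, and this is exactly where the L-space condition enters cleanly. Since $K$ is a positive L-space knot it is fibred with $\tau(K)=g(K)$, and the same holds for $K^r$; additivity of $\tau$ and of the Seifert genus under connected sum give $\tau(K\#K^r)=2\tau(K)$ and $g(K\#K^r)=2g(K)=2\tau(K)$. Because $\tau\le\nu^+\le g$ always, these two equalities squeeze $\nu^+(K\#K^r)=2\tau(K)$. Using $\nu^+(J)=\min\{i:V_i(J)=0\}$ together with the monotonicity $V_0\ge V_1\ge\cdots\ge 0$, one gets $V_n(K\#K^r)=0$ precisely for $n\ge 2\tau(K)$ and $V_n(K\#K^r)>0$ for $n<2\tau(K)$. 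Combined with the displayed formula this yields $\delta(D_+(K,n))=0\iff n\ge 2\tau(K)$, that is, $t_\delta(K)=2\tau(K)$.

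I expect the main obstacle to be the bookkeeping in the surgery step rather than the L-space input: one must correctly match $\frak{c}_0$ with the self-conjugate spin$^c$ structure of the half-integral surgery, verify that its Ni--Wu index is exactly $j_0(n)=n$, and settle the lens-space base case $d(L(4n+1,2),\frak{c}_0)=0$. Once the index $n$ is pinned down, the L-space part is purely formal, resting only on $\tau=g$ and the squeeze $\tau\le\nu^+\le g$; the torus-knot formulas of Theorem~\ref{deltainvariant} serve as a consistency check, since there $-4\,V_n(T_{r,s}\#T_{r,s})$ reproduces the stated maxima and the threshold $n=2\tau$.
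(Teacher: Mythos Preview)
Your proof is correct and arrives at the same reduction $\delta(D_+(K,n))=-4V_n(K\#K^r)$ that the paper uses (this is the content of the proof of Theorem~\ref{deltainvariant} together with Corollary~\ref{deltazerocor}), but you handle the crucial step $V_{2\tau(K)-1}(K\#K^r)>0$ differently. The paper argues at the chain level: it uses the staircase structure of $CFK^\infty(K)$ for an L-space knot, in particular that the top two Alexander gradings $\tau(K)$ and $\tau(K)-1$ are each one-dimensional, to read off from the tensor product $CFK^\infty(K\#K^r)$ that $V_{2\tau(K)}=0$ and $V_{2\tau(K)-1}=1$. You instead work entirely with numerical concordance invariants: since $\tau(K)=g(K)$ for a positive L-space knot, additivity gives $\tau(K\#K^r)=g(K\#K^r)=2\tau(K)$, and the squeeze $\tau\le\nu^+\le g$ forces $\nu^+(K\#K^r)=2\tau(K)$, i.e.\ $V_n=0\iff n\ge 2\tau(K)$.

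Your route is cleaner and makes transparent that the only input from the L-space hypothesis is $\tau=g$; the paper's approach, in exchange, yields the exact value $V_{2\tau(K)-1}(K\#K^r)=1$ and hence $\delta(D_+(K,2\tau(K)-1))=-4$ rather than merely $\neq 0$. One small correction of phrasing: at the self-conjugate spin$^c$ structure the two arguments in the Ni--Wu maximum are $n$ and $-n$, not both $n$; your conclusion is unaffected because $H_{-n}=V_n$ (as the paper also notes), so the maximum is indeed $V_n$.
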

We ask the following question.
\begin{que}
Does there exist a non-L-space knot $K$ with $t_{\delta}(K)\neq 2\tau(K)$?
\end{que}
\begin{thm}
\label{RationalBall}
Suppose that $n$ is a non-negative integer and $s=2,3$.
If $\Sigma_2(D_+(T_{s,sp+1},n))$ bounds a rational 4-ball, then $(s,p,n)=(2,1,6)$, $(2,3,12)$, $(3,1,12)$ or $(3,2,20)$.
\end{thm}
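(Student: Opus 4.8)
The plan is to run the $\delta$-invariant constraint and the Owens--Strle correction-term obstruction in tandem, reducing the claim to a finite, and then number-theoretic, verification. Throughout write $Y_{s,p,n}=\Sigma_2(D_+(T_{s,sp+1},n))$, and note from $\Delta_{D_+(K,n)}(-1)=4n+1$ that $H_1(Y_{s,p,n};\mathbb{Z})\cong\mathbb{Z}/(4n+1)$, independently of the companion.

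First I would record two coarse necessary conditions. Since a rational homology sphere bounding a rational homology $4$-ball $W$ has the kernel of $H_1(Y)\to H_1(W)$ a metabolizer $G$ of the linking form with $|H_1(Y)|=|G|^2$, bounding forces $4n+1$ to be an odd square, i.e.\ $n=a(a+1)$ for some integer $a\ge 0$; this is exactly the Fox--Milnor value already singled out for sliceness. Next, because $\Sigma_2(K')$ bounding a rational homology $4$-ball implies $\delta(K')=0$, Theorem~\ref{deltainvariant} forces $\lfloor n/2\rfloor\ge p$ when $s=2$ and $\lfloor n/3\rfloor\ge 2p$ when $s=3$, equivalently $n\ge 2p$ and $n\ge 6p$. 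All four triples of the statement satisfy both conditions ($4n+1=25,49,49,81$), but so do infinitely many others, so these alone do not suffice and the refined correction-term obstruction is where the content lies.

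The main step is to feed the full Owens--Strle obstruction the explicit correction terms of $Y_{s,p,n}$. Since $4n+1$ is odd, multiplication by $2$ is invertible on $\mathbb{Z}/(4n+1)$, so the affine identification of $\mathrm{Spin}^c(Y_{s,p,n})$ with $H_1$ carries no parity ambiguity, and when $4n+1=m^2$ the only candidate metabolizer is the unique order-$m$ subgroup $G=\langle m\rangle\le\mathbb{Z}/m^2$. By Owens and Strle's obstruction, if $Y_{s,p,n}$ bounds a rational homology $4$-ball then the $\mathrm{Spin}^c$ structures indexed by $G$ all extend over the ball, whence---applying the correction-term inequality of \cite{OS1} both to the ball and to its orientation reversal---$d(Y_{s,p,n},\mathfrak{c}_0+g)=0$ for every $g\in G$. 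I would then substitute into these $m$ vanishing equations the correction-term formula for $Y_{s,p,n}$, read off from its description as a surgery on the connected sum $T_{s,sp+1}\#T_{s,sp+1}$ (torus knots being invertible) realizing $H_1=\mathbb{Z}/(4n+1)$, together with the known $V_j$-invariants of the $L$-space knots $T_{s,sp+1}$.

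The heart of the argument, and the step I expect to be the main obstacle, is showing that this system vanishes only for the four listed triples. The plan here is a two-sided estimate: for fixed $s$ and $p$ the lens-space part of $d(Y_{s,p,n},\cdot)$ spreads with $n$ while the $V_j$-contribution of the fixed companion stays bounded, so some metabolizer correction term becomes nonzero once $n$ exceeds an explicit bound depending on $p$; combined with the lower bound $n\ge 2p$ (resp.\ $6p$) and the constraint $n=a(a+1)$, this confines $(p,n)$ to a finite window. It then remains to verify the finite list directly, checking that all $d$-invariants on $G$ vanish precisely for $(s,p,n)=(2,1,6),(2,3,12),(3,1,12),(3,2,20)$ and that some metabolizer correction term is nonzero otherwise; the genuinely-bounding cases of Theorem~\ref{2712} sit among these four. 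Making the $V_j$-control of the connected sums explicit, and the growth estimate effective enough to cut the search to a finite check, is where the bulk of the computation will concentrate.
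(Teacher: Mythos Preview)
Your overall strategy matches the paper's: reduce to $n=m(m+1)$ via $|H_1|$ being a square, identify the metabolizer as the unique order-$(2m+1)$ subgroup of $\mathbb Z/(2m+1)^2$, and impose Owens--Strle's vanishing $d(Y,\mathfrak c_0+\beta)=0$ for all $\beta$ in that subgroup, using the Ni--Wu surgery formula and the explicit $V_{s,p,k}$ computed earlier. That setup is exactly right.

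Where your plan diverges from the paper, and where there is a soft gap, is in the endgame. Your ``two-sided growth estimate'' bounds $n$ in terms of $p$ (since $V_{s,p,k}\le V_{s,p,0}$ is bounded while the lens-space term at large $\ell$ grows), and you already have $n\ge 2p$ (resp.\ $6p$). But these two bounds together give only a window of $n$ for each fixed $p$; they do not bound $p$ itself, so ``confines $(p,n)$ to a finite window'' does not follow from what you wrote. You would need the upper bound to be sublinear in $p$, and your argument as stated gives no reason for that.

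The paper avoids this by a sharper, and essentially algebraic, trick: it evaluates the vanishing condition only at $\ell=2$ and $\ell=3$. At $\ell=2$ one gets $V_{s,p,m^2-m-1}=1$, and since the $V_{s,p,k}$ are piecewise-linear in $k$ with slope determined by $s$, this equation \emph{solves for $p$ in terms of $m$} (e.g.\ for $s=2$, parity forces $m^2-m-1=2p-1$, so $p=\binom{m}{2}$). Substituting this into the $\ell=3$ condition $V_{s,p,m^2-2m-1}=2$ yields a single equation in $m$ with only $m=2,3$ (resp.\ $m=3,4$) as solutions. So two metabolizer points already force the four listed triples; no growth estimate or finite search is needed. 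This is the step your proposal is missing, and it is what makes the proof short.

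If you want to salvage your approach, the fix is to replace the vague growth comparison by exactly this: use one metabolizer equation to eliminate $p$, then a second to constrain $m$.
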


These results are due to Owens and Strle's refinement of the $\delta$-invariant in \cite{Ow}.
Thus we have the following corollary:
\begin{cor}
\label{converse}
Let $n$ be a non-negative integer and $p$ a positive integer.
Then we have:
\begin{center}
$\Sigma_2(D_+(T_{2,2p+1}),n)$ bounds a rational 4-ball $\Leftrightarrow (p,n)=(1,6)$ or $(3,12)$.
\end{center}
\begin{center}
$\Sigma_2(D_+(T_{3,3p+1}),n)$ bounds a rational 4-ball $\Leftrightarrow (p,n)=(1,12)$ or $(2,20)$.
\end{center}
\end{cor}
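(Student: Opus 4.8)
The plan is to read the corollary as the splicing-together of two halves: Theorem~\ref{RationalBall} delivers the forward implication wholesale, and the reverse implication reduces to exhibiting rational $4$-balls in four explicit cases. First I would record the dictionary between the two statements. Substituting $s=2$ turns $T_{s,sp+1}$ into $T_{2,2p+1}$, and $s=3$ turns it into $T_{3,3p+1}$, so the four triples $(s,p,n)$ allowed by Theorem~\ref{RationalBall}, namely $(2,1,6),(2,3,12),(3,1,12),(3,2,20)$, correspond exactly to the pairs $(p,n)=(1,6),(3,12)$ (for $s=2$) and $(p,n)=(1,12),(2,20)$ (for $s=3$) listed in the corollary. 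Hence if $\Sigma_2(D_+(T_{s,sp+1},n))$ bounds a rational $4$-ball, Theorem~\ref{RationalBall} forces $(s,p,n)$ into this list, which is precisely the ``only if'' direction of both equivalences. No auxiliary obstruction needs to be re-invoked here: the vanishing of $\delta$ from Theorem~\ref{deltainvariant} and the perfect-square condition $4n+1=(2m+1)^2$ on $|H_1(\Sigma_2)|$ hold in all four cases, but they are already subsumed by the sharp Owens--Strle obstruction packaged inside Theorem~\ref{RationalBall}.

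For the ``if'' direction I would dispatch the four surviving triples one at a time. Two of them, $(2,3,12)$ and $(3,2,20)$, are the knots $D_+(T_{2,7},12)$ and $D_+(T_{3,7},20)$, and Theorem~\ref{2712} already asserts that their double branched covers bound rational $4$-balls, so nothing remains to prove there. The triple $(2,1,6)$ is the Casson--Rolfsen slice knot $D_+(T_{2,3},6)$ recorded in Table~\ref{tableofslice}; since a slice knot $K'$ always has $\Sigma_2(K')$ bounding a rational $4$-ball (the implication stated in Section~\ref{ra4ball}), this case is immediate as well. Thus three of the four values are handed to us by previously established facts, and the entire content of the reverse direction is concentrated in the last triple.

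This leaves the single genuinely new case $(3,1,12)$, i.e.\ proving that $\Sigma_2(D_+(T_{3,4},12))$ bounds a rational $4$-ball, and I expect this to be the main obstacle. The cleanest route is to show $D_+(T_{3,4},12)$ is itself slice and then apply slice $\Rightarrow$ rational-ball-bound once more. Any $n$-twisted double bounds a genus-one Seifert surface with Seifert matrix $\left(\begin{smallmatrix}-1&1\\0&n\end{smallmatrix}\right)$, which is algebraically slice exactly when $n=m(m+1)$, the metabolizer being generated by a derivative curve $\gamma$ of the form $(m+1)a+b$, where $a$ is the clasp loop and $b$ is a parallel copy of the companion $T_{3,4}$. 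For $n=12$ we have $m=3$ and $\gamma=4a+b$, exactly parallel to the $m=2$, $\gamma=3a+b$ situation underlying Casson--Rolfsen (where $12=3\cdot4$ and $6=2\cdot3$ are the respective Heegaard-torus framings). I would therefore compute the knot type of this derivative $\gamma$ and try to show it is smoothly slice—ideally unknotted—so that surgering the genus-one surface along $\gamma$ with its $0$-framing produces an honest slice disk for $D_+(T_{3,4},12)$. Should a direct slice-disk construction prove delicate, the fallback is to compute $\Sigma_2(D_+(T_{3,4},12))$ as a graph manifold: because the Whitehead pattern has winding number $0$, this cover is assembled from the double branched cover of the pattern solid torus glued to two copies of the Seifert-fibered exterior of $T_{3,4}$, and one can then seek a rational homology $4$-ball by a plumbing/$\partial$-reduction argument guided by $|H_1|=49=7^2$. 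The hard part is precisely making one of these two constructions rigorous, since—unlike the other three triples—$(3,1,12)$ is not supplied by an earlier theorem.
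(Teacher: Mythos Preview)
Your overall decomposition is exactly the paper's: the forward direction is Theorem~\ref{RationalBall}, and the reverse direction is handled case by case, with $(2,3,12)$ and $(3,2,20)$ coming from Theorem~\ref{2712} and the remaining two cases coming from sliceness of the relevant Whitehead doubles.

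The only discrepancy is that you misjudge the difficulty of the case $(3,1,12)$. You call it ``the single genuinely new case'' and the ``main obstacle,'' and you outline a derivative-curve or graph-manifold strategy as if sliceness of $D_+(T_{3,4},12)$ were an open problem to be worked out from scratch. In the paper this case is treated as just as immediate as $(2,1,6)$: right before the corollary, the paper observes that Casson's proof of the sliceness of $D_+(T_{2,3},6)$ extends easily to show that $D_+(T_{n,n+1},n(n+1))$ is slice for every $n$, and the instance $n=3$ gives precisely $D_+(T_{3,4},12)$. Thus both $(p,n)=(1,6)$ and $(p,n)=(1,12)$ are dispatched by the same ``slice $\Rightarrow$ $\Sigma_2$ bounds a rational $4$-ball'' implication, citing Casson for the sliceness. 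Your proposed fallback constructions are therefore unnecessary, and the ``hard part'' you anticipate does not exist. Once you replace your extended discussion of $(3,1,12)$ with a one-line appeal to this fact, your proof coincides with the paper's.
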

\begin{proof}
The two cases $(1,6)$ and $(1,12)$ are immediate because $D_+(T_{2,3},6)$ and $D_+(T_{3,4},12)$ are slice (by Casson).
The second two cases are due to Theorem~\ref{2712}.
\qed\end{proof}

We can easily extend the proof (by Casson) of the sliceness of $D_+(T_{2,3},6)$ by Kauffman in \cite{Kau} 
to the sliceness of $D_+(T_{n,n+1},n(n+1))$.
We raise a question of the rational 4-ball bound-ness of the double branched cover of $D_+(T_{p,q},m(m+1))$.
Collins' obstruction in \cite{JC} denies the sliceness of $D_+(T_{p,q},m(m+1))$ unless $\{p,q\}=\{m,m+1\}$.
\begin{que}
Let $n, p$ and $q$ be integers with $n=m(m+1)$ and $\{p,q\}\neq \{m,m+1\}$ for some integer $m$.
When does $\Sigma_2(D_+(T_{p,q},n))$ bound a rational 4-ball?
\end{que}

\section*{Acknowledgements}
Our work in this paper was inspired by Masatsuna Tsuchiya's talk in the handle seminar in 2013 at Tokyo Institute of Technology, Ookayama.
The author is deeply grateful for his talk.
The author thanks Tetsuya Abe in Osaka City University for giving useful comments.

\section{Heegaard Floer homology of $M_n(T_{2,3})$ and $M_n(K)$.}
We prove Theorem~\ref{main}.
Let $M_n$ denote $M_n(T_{2,3})$.

{\bf Proof of Theorem~\ref{main}.}
Ozsv\'ath-Szab\'o's $\tau$-invariant of $T_{2,3}$ is $1$.
From the equality $M_n=S^3_1(D_+(T_{2,3},n))$ and Proposition~7.2 in \cite{H}, we have
$$\widehat{HF}(M_n)=\widehat{HF}(S^3_1(D_+(T_{2,3},n)))$$
$$\cong\begin{cases}{\Bbb F}_{(-1)}^{n-4}\oplus {\Bbb F}_{(0)}^{n-3}\oplus V&n\ge 2\\
{\Bbb F}_{(-1)}^{-1-n}\oplus {\Bbb F}_{(-2)}^{2-n}\oplus {\Bbb F}_{(0)}^{-2}\oplus V&n< 2.\end{cases}$$
The negative exponent means the quotient operation in place of the direct sum operation.
The summand $V$ is isomorphic to 
$$V=\overset{1}{\underset{i=-1}{\oplus}}[H_{\ast}({\mathcal F}(T_{2,3},i))]^2\overset{1}{\underset{i=-1}{\oplus}}[H_{\ast+1}({\mathcal F}(T_{2,3},i))]^2.$$
The chain complex $CFK^\infty(T_{2,3})$ is as in {\sc Figure}~\ref{trefoilcomplex}.
\begin{figure}[htbp]
\centering
\unitlength 0.1in
\begin{picture}( 21.2000, 22.4000)(  4.0000,-28.0000)
%
\special{pn 8}%
\special{sh 1}%
\special{ar 2000 800 10 10 0  6.28318530717959E+0000}%
%
\special{pn 8}%
\special{sh 1}%
\special{ar 1200 2400 10 10 0  6.28318530717959E+0000}%
%
\special{pn 8}%
\special{sh 1}%
\special{ar 1200 2400 10 10 0  6.28318530717959E+0000}%
%
\special{pn 8}%
\special{sh 1}%
\special{ar 1200 2000 10 10 0  6.28318530717959E+0000}%
%
\special{pn 8}%
\special{sh 1}%
\special{ar 1200 1600 10 10 0  6.28318530717959E+0000}%
%
\special{pn 8}%
\special{sh 1}%
\special{ar 2400 800 10 10 0  6.28318530717959E+0000}%
%
\special{pn 8}%
\special{sh 1}%
\special{ar 2400 1200 10 10 0  6.28318530717959E+0000}%
%
\special{pn 8}%
\special{sh 1}%
\special{ar 1600 2000 10 10 0  6.28318530717959E+0000}%
%
\special{pn 8}%
\special{sh 1}%
\special{ar 1600 1600 10 10 0  6.28318530717959E+0000}%
%
\special{pn 8}%
\special{sh 1}%
\special{ar 1600 1200 10 10 0  6.28318530717959E+0000}%
%
\special{pn 8}%
\special{sh 1}%
\special{ar 2000 1600 10 10 0  6.28318530717959E+0000}%
%
\special{pn 8}%
\special{sh 1}%
\special{ar 2000 1200 10 10 0  6.28318530717959E+0000}%
%
\special{pn 8}%
\special{pa 2350 800}%
\special{pa 2050 800}%
\special{fp}%
\special{sh 1}%
\special{pa 2050 800}%
\special{pa 2118 820}%
\special{pa 2104 800}%
\special{pa 2118 780}%
\special{pa 2050 800}%
\special{fp}%
%
\special{pn 8}%
\special{pa 1950 1200}%
\special{pa 1650 1200}%
\special{fp}%
\special{sh 1}%
\special{pa 1650 1200}%
\special{pa 1718 1220}%
\special{pa 1704 1200}%
\special{pa 1718 1180}%
\special{pa 1650 1200}%
\special{fp}%
%
\special{pn 8}%
\special{pa 1550 1600}%
\special{pa 1250 1600}%
\special{fp}%
\special{sh 1}%
\special{pa 1250 1600}%
\special{pa 1318 1620}%
\special{pa 1304 1600}%
\special{pa 1318 1580}%
\special{pa 1250 1600}%
\special{fp}%
%
\special{pn 8}%
\special{pa 2400 850}%
\special{pa 2400 1150}%
\special{fp}%
\special{sh 1}%
\special{pa 2400 1150}%
\special{pa 2420 1084}%
\special{pa 2400 1098}%
\special{pa 2380 1084}%
\special{pa 2400 1150}%
\special{fp}%
%
\special{pn 8}%
\special{pa 2000 1250}%
\special{pa 2000 1550}%
\special{fp}%
\special{sh 1}%
\special{pa 2000 1550}%
\special{pa 2020 1484}%
\special{pa 2000 1498}%
\special{pa 1980 1484}%
\special{pa 2000 1550}%
\special{fp}%
%
\special{pn 8}%
\special{pa 1600 1650}%
\special{pa 1600 1950}%
\special{fp}%
\special{sh 1}%
\special{pa 1600 1950}%
\special{pa 1620 1884}%
\special{pa 1600 1898}%
\special{pa 1580 1884}%
\special{pa 1600 1950}%
\special{fp}%
%
\special{pn 8}%
\special{pa 1200 2050}%
\special{pa 1200 2350}%
\special{fp}%
\special{sh 1}%
\special{pa 1200 2350}%
\special{pa 1220 2284}%
\special{pa 1200 2298}%
\special{pa 1180 2284}%
\special{pa 1200 2350}%
\special{fp}%
%
\special{pn 8}%
\special{sh 1}%
\special{ar 800 2000 10 10 0  6.28318530717959E+0000}%
%
\special{pn 8}%
\special{pa 1150 2000}%
\special{pa 850 2000}%
\special{fp}%
\special{sh 1}%
\special{pa 850 2000}%
\special{pa 918 2020}%
\special{pa 904 2000}%
\special{pa 918 1980}%
\special{pa 850 2000}%
\special{fp}%
%
\special{pn 8}%
\special{pa 400 2400}%
\special{pa 2400 2400}%
\special{fp}%
\special{sh 1}%
\special{pa 2400 2400}%
\special{pa 2334 2380}%
\special{pa 2348 2400}%
\special{pa 2334 2420}%
\special{pa 2400 2400}%
\special{fp}%
%
\special{pn 8}%
\special{pa 800 2800}%
\special{pa 800 600}%
\special{fp}%
\special{sh 1}%
\special{pa 800 600}%
\special{pa 780 668}%
\special{pa 800 654}%
\special{pa 820 668}%
\special{pa 800 600}%
\special{fp}%
\put(25.2000,-24.4000){\makebox(0,0)[lb]{$i$}}%
\put(8.9000,-7.3000){\makebox(0,0)[lb]{$j$}}%
\end{picture}
\caption{$CFK^\infty(T_{2,3})$.}
\label{trefoilcomplex}
\end{figure}
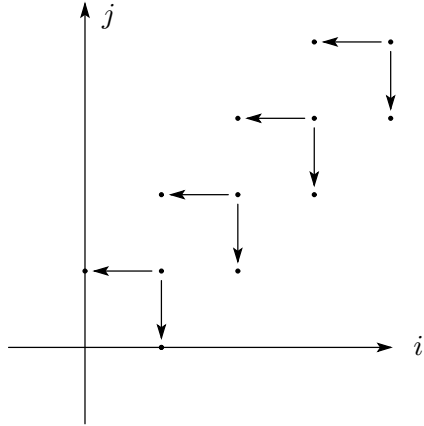
Therefore, we have
\begin{equation}
\label{hat}
H_{\ast}({\mathcal F}(T_{2,3},i))\cong\begin{cases}{\Bbb F}_{(-2)}&i=-1\\{\Bbb F}_{(0)}&i=1\\0&otherwise.\end{cases}
\end{equation}
As a result, the $V$-part is computed by 
$$V={\Bbb F}_{(-2)}^2\oplus {\Bbb F}_{(0)}^2\oplus {\Bbb F}_{(-3)}^2\oplus {\Bbb F}_{(-1)}^2.$$
Therefore, we have
\begin{equation}
\label{alt}
\widehat{HF}(M_n)=
\begin{cases}
{\Bbb F}_{(0)}^{n-1}\oplus{\Bbb F}_{(-1)}^{n-2}\oplus{\Bbb F}_{(-2)}^2\oplus{\Bbb F}_{(-3)}^2&n\ge 2\\
{\Bbb F}_{(-1)}^{1-n}\oplus{\Bbb F}_{(-2)}^{4-n}\oplus{\Bbb F}_{(-3)}^2&n< 2.\end{cases}
\end{equation}
Here we use the exact triangle
$$\cdots\to \widehat{HF}(M_n)\overset{j_\ast}{\to} HF^+(M_n(T_{2,3}))\overset{\times U}{\to} HF^+(M_n)\overset{\delta}{\to} \widehat{HF}(M_n)\to \cdots.$$
The map $j_\ast$ is the one induced by the natural injection.
The multiplication of $U$ is Ozsv\'ath-Szab\'o's usual action lowering the degree by two.
The connecting homomorphism $\delta$ shifts the degree by $1$.

We compute the correction term $d(M_n)$.
$$HF^+(S^3_1(D_+(T_{2,3},n)))\cong H_{\ast}(C\{\max(i,j)\ge 0\}),$$
where the chain complex $C$ is $CFK^\infty(D_+(T_{2,3},n))$.

{\bf The case of $n\ge 2$.}
$C\{i=0\}$ is filtered chain homotopic to 
$$C\{(0,j)\}\simeq\begin{cases}
{\Bbb F}_{(1)}^{n-4}\overset{1}{\underset{i=-1}{\oplus}}H_{\ast-1}({\mathcal F}(T_{2,3},i))^2&j=1\\
{\Bbb F}_{(0)}^{2n-7}\overset{1}{\underset{i=-1}{\oplus}}H_{\ast}({\mathcal F}(T_{2,3},i))^4&j=0\\
{\Bbb F}_{(-1)}^{n-4}\overset{1}{\underset{i=-1}{\oplus}}H_{\ast+1}({\mathcal F}(T_{2,3},i))^2&j=-1,
\end{cases}$$
due to the result in \cite{H}.
The boundary maps:
$$\partial_1^k: C\{(0,k)\}\to C\{(0,k-1)\},$$
on $CFK^\infty(D_+(T_{2,3},n))$ consists of
following the notation in \cite{H}.
We denote the map:
$$C\{(k,0)\}\to C\{(k-1,0)\}$$
obtained by exchanging $i$ and $j$ by $\delta_1^k$.
The boundary map $\partial^0_1$ induces a surjective map on the group $\widehat{HFK}$ (Lemma~6.1 in \cite{H})

The component in $\widehat{HF}(M_n)$ attaining the minimal degree in the non-torsion part in $HF^+(M_n)$ is
located at $(i,j)=(0,0)$, that is, $x\in {\Bbb F}_{(0)}\subset C\{(0,0)\}$.
This generator $x$ vanishes by the boundary map $\partial^0_1:C\{(0,0)\}\to C\{(0,-1)\}$,
because it is the generator of $\widehat{HF}(S^3)$.
It also vanishes by the map $\delta^0_1:C\{(0,0)\}\to C\{(-1,0)\}$.
Hence $x$ is a generator in $HF^+(M_n)$ and it becomes clearly $U\cdot x=0$ and $\text{gr}(x)=0$,
where $\text{gr}$ stands for the absolute grading on $C$.
This means $d(M_n)=0$.

{\bf The case of $n< 2$.}
$C\{i=0\}$ is filtered chain homotopic to 
$$C\{(0,j)\}\simeq
\begin{cases}
{\Bbb F}_{(0)}^{2-n}\oplus\left(\overset{1}{\underset{i=-1}{\oplus}} H_{\ast-1}({\mathcal F}(T_{2,3},i))^2\right)/{\Bbb F}_{(1)}^2&j=1\\
{\Bbb F}_{(-1)}^{3-2n}\oplus\left(\overset{1}{\underset{i=-1}{\oplus}}H_{\ast}({\mathcal F}(T_{2,3},i))^4\right)/{\Bbb F}_{(0)}^4&j=0\\
{\Bbb F}_{(-2)}^{2-n}\oplus\left(\overset{1}{\underset{i=-1}{\oplus}}H_{\ast+1}({\mathcal F}(T_{2,3},i))^2\right)/{\Bbb F}_{(-1)}^2&j=-1
\end{cases}$$
$$\cong
\begin{cases}
{\Bbb F}_{(0)}^{2-n}\oplus H_{\ast-1}({\mathcal F}(T_{2,3},-1))^2&j=1\\
{\Bbb F}_{(-1)}^{3-2n}\oplus H_{\ast}({\mathcal F}(T_{2,3},-1))^4&j=0\\
{\Bbb F}_{(-2)}^{2-n}\oplus H_{\ast+1}({\mathcal F}(T_{2,3},-1))^2&j=-1,
\end{cases}$$
due to the result in \cite{H}.
The generator $x$ in $\widehat{HF}(S^3)$ lies in $C\{(0,1)\}$.
That is, $x\in {\Bbb F}_{(0)}\subset C\{(0,1)\}$.
The boundary map
$$\delta^0_1:C\{(0,0)\}\to C\{(-1,0)\}$$
is surjective due to \cite{H}.
The $U$-action $C\{(-1,0)\}\ni U\cdot x\neq 0$, namely, $U\cdot x$ is the minimal generator in $HF^+(M_n)$.
Thus we have $d(M_n)=\text{gr}(U\cdot x)=-2$.

We put $HF^+(M_n)\cong {\mathcal T}^+_{(d(M_n))}\overset{m}{\underset{i=1}{\oplus}} W_i$, where $W_i={\Bbb F}[n_i]_{(d_i)}\cong {\Bbb F}[U]/U^{n_i}$
with minimal degree $d_i$.
Then we have
$$\widehat{HF}(M_n)={\Bbb F}_{(d(M_n))}\overset{m}{\underset{i=1}{\oplus}}({\Bbb F}_{(d_i)}\oplus{\Bbb F}_{(d_i+2n_i-1)}).$$
From the computation of $\widehat{HF}(M_n)$, the number of the components is
$$m=\begin{cases}n&n\ge 2\\3-n&n<2.\end{cases}$$
If some integer $i$ has $n_i>1$, then there exists in the $\widehat{HF}(M_n)$ a pair of two summands with the difference of $2n_i-1\ge 3$.
The pair is just the case $n_i=2$ and the only pair is ${\Bbb F}_{(0)}$ and ${\Bbb F}_{(-3)}$ in the case of $n\ge 2$,
due to (\ref{alt}).
Hence, the number of the pairs is at most two.
\begin{prop}
There exists no such a pair.
Therefore, in the case of $n\ge 2$, we have
$$HF^+(M_n)={\mathcal T}^+_{(0)}\oplus {\Bbb F}_{(-1)}^{n-2}\oplus{\Bbb F}_{(-3)}^2$$
and in the case of $n<2$, we have
$$HF^+(M_n)={\mathcal T}^+_{(-2)}\oplus {\Bbb F}_{(-2)}^{1-n}\oplus{\Bbb F}_{(-3)}^2.$$
\end{prop}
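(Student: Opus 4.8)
The plan is to determine the $U$-module structure of the reduced part from the Casson invariant, which is already in hand from (\ref{casson}) as $\lambda(M_n)=-n$. As set up above, $\widehat{HF}(M_n)$ in (\ref{alt}) together with the correction term $d(M_n)$ fix the number $m$ of cyclic summands $W_i\cong {\Bbb F}[U]/U^{n_i}$ and their bottom gradings, but not the lengths $n_i$; the only remaining freedom is how many of the $W_i$ have $n_i=2$, each such summand being forced to have bottom grading $-3$ and top grading $-1$ and to contribute the pair ${\Bbb F}_{(-3)},{\Bbb F}_{(0)}$ to $\widehat{HF}(M_n)$. Write $k$ for the number of these length-two summands, so $0\le k\le 2$; the goal is to prove $k=0$.

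First I would dispose of the case $n<2$. After deleting the tower generator in grading $d(M_n)=-2$, every grading occurring in $\widehat{HF}(M_n)$ lies in $\{-1,-2,-3\}$ by (\ref{alt}), so no two of the surviving generators differ by $2n_i-1=3$. Hence no length-two summand is possible, $k=0$ is automatic, and the asserted form of $HF^+(M_n)$ follows at once.

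The substantive case is $n\ge 2$. Here I would invoke the Ozsv\'ath-Szab\'o identity
$$\lambda(Y)=\chi(HF_{\text{red}}(Y))-\frac{1}{2}\,d(Y),$$
valid for every integral homology sphere $Y$, with $\chi$ the Euler characteristic for the absolute ${\Bbb Z}$-grading. Together with $\lambda(M_n)=-n$ and $d(M_n)=0$ this gives $\chi(HF_{\text{red}}(M_n))=-n$. On the other hand, for each admissible $k$ the bottom gradings and lengths of the $W_i$ are forced by (\ref{alt}), and a count of generators grading-by-grading yields $\chi(HF_{\text{red}}(M_n))=-n+k$. Comparing the two evaluations forces $k=0$, so every $W_i$ has $n_i=1$; substituting the resulting bottom gradings reproduces exactly $HF^+(M_n)={\mathcal T}^+_{(0)}\oplus {\Bbb F}_{(-1)}^{n-2}\oplus{\Bbb F}_{(-3)}^2$.

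The main obstacle I anticipate is keeping the sign and normalization conventions mutually consistent among the three ingredients: the surgery formula for $\lambda$ in (\ref{casson}), the identity above, and the absolute grading used to compute $\chi$. A mismatch would shift the target value and destroy the matching $k=0$, so I would pin the conventions down on a test case such as $S^3_{+1}(T_{2,3})$, where $\lambda=1$, $HF_{\text{red}}=0$, and $d=-2$. A second point to check is that, once $\widehat{HF}(M_n)$, $m$, and $d(M_n)$ are fixed, the graded isomorphism type of $HF_{\text{red}}(M_n)$---and hence its Euler characteristic---depends on $k$ alone, which makes the count $\chi=-n+k$ unambiguous.
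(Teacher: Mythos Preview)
Your proof is correct and follows essentially the same approach as the paper's. Both arguments use the Casson invariant constraint $\lambda(M_n)=-n$ together with the Ozsv\'ath--Szab\'o relation $\lambda=\chi(HF_{\text{red}})-\tfrac{1}{2}d$ to rule out any length-two summand; the paper checks the cases $k=1$ and $k=2$ separately and derives the contradictions $\lambda=-n+1$ and $\lambda=-n+2$, while you package this as the single formula $\chi(HF_{\text{red}})=-n+k$. Your explicit disposal of the $n<2$ case is also the same observation the paper makes just before the proposition (namely that the only possible difference-$3$ pair $({\Bbb F}_{(0)},{\Bbb F}_{(-3)})$ does not occur in $\widehat{HF}(M_n)$ when $n<2$).
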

\begin{proof}
We may consider the $n\ge 2$ case.
Suppose that there exist such two pairs in $\widehat{HF}(M_n)$.
Then the components are ${\Bbb F}[2]_{(-3)}^2$ and the remaining part is 
$${\mathcal T}^+_{(0)}\oplus{\Bbb F}_{(-1)}^{n-4}\oplus{\Bbb F}_{(-2)}^2.$$
The Casson invariant becomes $\lambda(M_n)=-4-(n-4)+2=-n+2$.
This is contradiction about (\ref{casson}).

Suppose that there exists such a single pair ${\Bbb F}_{(0)}$, ${\Bbb F}_{(-3)}$ in $\widehat{HF}(M_n)$.
Then the component is ${\Bbb F}[2]_{(-3)}$ and the remaining part is 
$${\mathcal T}^+_{(0)}\oplus{\Bbb F}_{(-1)}^{n-3}\oplus{\Bbb F}_{(-2)}\oplus{\Bbb F}_{(-3)}.$$
The Casson invariant is $\lambda=-2-(n-3)+1-1=-n+1$.
This is contradiction about (\ref{casson}).
Therefore since we have $n_i=1$ for any $i$, we get the required computation of $HF^+(M_n)$.
\qed
\end{proof}
This proposition means Theorem~\ref{main}.
In the case of $n=6$, we can also check our computation (Theorem~\ref{main}) by N\'emethi's algorithm (\cite{N}) on any plumbed 3-manifold with at most one bad vertex.
In fact for $M_6$ we can construct the negative definite bound as in Figure~\ref{nega}.
The multiplicity $-1$ vertex is the only bad vertex.
Then $HF^+(-M_6)$ can be computed as follows:
$$HF^+(-M_6)={\mathcal T}^+_{(0)}\oplus {\Bbb F}_{(0)}^4\oplus {\Bbb F}_{(2)}^2.$$
For example, use HFNem by MAGMA code in \cite{Ka}.
By reversing the orientation, we get
$$HF^+(M_6)={\mathcal T}^+_{(0)}\oplus {\Bbb F}_{(-1)}^4\oplus {\Bbb F}_{(-3)}^2.$$
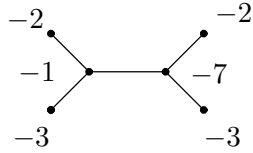
\begin{figure}[htbp]
\centering
\unitlength 0.1in
\begin{picture}( 10.9000,  6.5000)(  5.7000,-10.3000)
%
\special{pn 8}%
\special{pa 800 600}%
\special{pa 1000 800}%
\special{fp}%
\special{pa 1000 800}%
\special{pa 800 1000}%
\special{fp}%
\special{pa 1000 800}%
\special{pa 1400 800}%
\special{fp}%
\special{pa 1400 800}%
\special{pa 1600 600}%
\special{fp}%
\special{pa 1400 800}%
\special{pa 1600 1000}%
\special{fp}%
%
\special{pn 20}%
\special{sh 1}%
\special{ar 1600 600 10 10 0  6.28318530717959E+0000}%
\special{sh 1}%
\special{ar 1600 1000 10 10 0  6.28318530717959E+0000}%
\special{sh 1}%
\special{ar 1400 800 10 10 0  6.28318530717959E+0000}%
\special{sh 1}%
\special{ar 1000 800 10 10 0  6.28318530717959E+0000}%
\special{sh 1}%
\special{ar 800 600 10 10 0  6.28318530717959E+0000}%
\special{sh 1}%
\special{ar 800 1000 10 10 0  6.28318530717959E+0000}%
\special{sh 1}%
\special{ar 800 1000 10 10 0  6.28318530717959E+0000}%
\put(5.7000,-5.8000){\makebox(0,0)[lb]{$-2$}}%
\put(6.0000,-12.0000){\makebox(0,0)[lb]{$-3$}}%
\put(16.0000,-12.0000){\makebox(0,0)[lb]{$-3$}}%
\put(16.6000,-5.5000){\makebox(0,0)[lb]{$-2$}}%
\put(15.3000,-8.7000){\makebox(0,0)[lb]{$-7$}}%
\put(6.3000,-8.6000){\makebox(0,0)[lb]{$-1$}}%
\end{picture}
\caption{The negative definite plumbing of $M_6$.}
\label{nega}
\end{figure}
\subsection{The Heegaard Floer homology of $M_n(K)$.}
We define $\widehat{CF}(K)={\bigcup}_{i\in{\Bbb Z}}{\mathcal F}(K,i)$, i.e., it is chain homotopy equivalent to $\widehat{CF}(K)\simeq \widehat{CF}(S^3)$.
Here we define to be $\epsilon_i$ the composition
$$\epsilon_i:{\mathcal F}(K,i)\hookrightarrow \widehat{CF}(K)\to {\Bbb F}_{(0)}$$ for any $i$,
where the last map is the one that 
the homological generator is mapped to $1$ and any other elements to $0$.
Furthermore, since the map $\widehat{CF}(K)\to {\Bbb F}_{(0)}$ is splittable, we obtain the natural decomposition $\widehat{CF}(K)\cong \widetilde{CF}(K)\oplus {\Bbb F}$.
We denote the kernel of $\varphi$ by
$$\widetilde{\mathcal F}(K,i):=\ker(\epsilon_i).$$
Then $\widetilde{\mathcal F}(K,i)$ is a filter on $\widetilde{CF}(K):=\bigcup_{i\in{\Bbb Z}}\widetilde{\mathcal F}(K,i)$.
The chain complex $\widetilde{CF}(K)$ is acyclic, because $\widehat{CF}(K)\to {\Bbb F}_{(0)}$ induces an isomorphism on the homology.
We say $\widetilde{\mathcal F}(K,i)$ to be a {\it reduced knot filtration}.

{\bf Proof of Theorem~\ref{genmain}.}
In the same way as the case where $K$ is the right-handed trefoil,
$$d(M_n(K))=\begin{cases}0&n\ge 2\tau(K)\\-2&n<2\tau(K).\end{cases}$$
$$HF_{\text{\normalfont red}}(M_n(K))\cong\begin{cases}
{\Bbb F}_{(-1)}^{n-2g-2}\overset{g}{\underset{i=-g}{\oplus}}H_{\ast+1}({\mathcal F}(K,i))^2&n\ge 2\tau(K)\\
{\Bbb F}_{(-2)}^{2\tau(K)-n-1}\oplus{\Bbb F}_{(-1)}^{2\tau(K)-2g-2}\overset{g}{\underset{i=-g}{\oplus}}H_{\ast+1}({\mathcal F}(K,i))^2&n< 2\tau(K)
\end{cases}
$$

For each integer $i$ with $i\ge \tau(K)$, $\epsilon_i$ is a non-trivial map and $H_{\ast+1}({\mathcal F}(K,i))$ contains at least one summand ${\Bbb F}_{(-1)}$,
which is non-trivial via the map $\epsilon_i$.
For each integer $i$ with $i<\tau(K)$, $\epsilon_i$ is the 0-map.
We thus have $\widetilde{\mathcal F}(K,i)\cong {\mathcal F}(K,i)$ in such an integer $i$.
Thus, we have the following:
\begin{eqnarray*}
&&{\Bbb F}_{(-1)}^{2\tau(K)-2g-2}\overset{g}{\underset{i=-g}{\oplus}}H_{\ast+1}({\mathcal F}(K,i))^2\\
&\cong&\overset{\tau(K)-1}{\underset{i=-g}{\oplus}}H_{\ast+1}({\mathcal F}(K,i))^2\overset{g}{\underset{i=\tau(K)}{\oplus}}\left(H_{\ast+1}({\mathcal F}(K,i))/{\Bbb F}_{(-1)}\right)^2\\
&\cong&\overset{\tau(K)-1}{\underset{i=-g}{\oplus}}H_{\ast+1}(\widetilde{\mathcal F}(K,i))^2\overset{g}{\underset{i=\tau(K)}{\oplus}}H_{\ast+1}(\widetilde{\mathcal F}(K,i))^2\\
&\cong&\overset{g}{\underset{i=-g}{\oplus}}H_{\ast+1}(\widetilde{\mathcal F}(K,i))^2.
\end{eqnarray*}
\hfill\qed

We prove Theorem~\ref{fig8}.\\
{\bf Proof of Theorem~\ref{fig8}.}
The Heegaard Floer homology of $-1$-surgery of a knot $K'$ is computed by using the quotient complex:
$$CFK^\infty(K')\{i\ge 0\text{ and }j\ge 0\},$$
Since the double chain complex $CFK^\infty(D_+(K,n))$ can be seen in the proof of Theorem~\ref{genmain}.

Let $K'=D_+(K,n)$ and $C:=CFK^\infty(K')$.
Suppose that $n\ge 2\tau(K)$.
Let $x_0\in C\{i=0\}$ be an element with non-zero $[x_0]\in H_\ast(C\{i\ge 0\})$.
This element $x_0$ lies in $C\{(0,0)\}$ and clearly survives in $x_0\in C\{i\ge0 \text{ and }j\ge 0\}$ as a non-zero element in ${\Bbb F}_{(0)}\subset H_\ast(C\{(0,0)\})$.
This is the bottom class in the ${\mathcal T}^+$-component in $H_\ast(C\{i\ge 0\text{ and }j\ge 0\})$.
Hence, we have $d(M_n(F,K))=0$.

Suppose that $n<2\tau(K)$.
The same element in $C\{i\ge 0\}$ satisfying the same condition as above is homologous to an element $x_0$ in $C\{(0,1)\}$.
The element is the bottom class in the ${\mathcal T}^+$-component in $H_\ast(C\{i\ge 0\text{ and }j\ge 0\})$ with absolute grading $0$.
Thus $d(M_n(F,K))=0$.
\hfill\qed

\subsection{The total sum of Euler numbers of the reduced knot filtration.}
\label{genered}
As a corollary of the Casson invariant formula (\ref{casson}) and Heegaard Floer homology formula (Theorem~\ref{genmain}) we give a new formula of the $\tau$-invariant:
\begin{cor}
\label{filtEul}
The total Euler number of the reduced knot filtration is $\tau(K)$, namely we have:
$$\tau(K)=\sum_{i=-g}^g\chi(\widetilde{\mathcal F}(K,i))=\sum_{i\in {\Bbb Z}}\chi(H_\ast(\widetilde{\mathcal F}(K,i))).$$
\end{cor}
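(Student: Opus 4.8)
The plan is to play the two available computations of the Casson invariant of $M_n(K)$ against each other. On one side we have the Dehn surgery value $\lambda(M_n(K))=-n$ recorded in (\ref{casson}); on the other we have the complete description of $HF^+(M_n(K))$ furnished by Theorem~\ref{genmain}. The bridge between them is the Ozsv\'ath--Szab\'o surgery-theoretic identity
$$\lambda(Y)=\chi(HF^+_{\text{red}}(Y))-\tfrac12\,d(Y),$$
valid for any integral homology sphere $Y$. First I would apply this with $Y=M_n(K)$, so that the left-hand side is $-n$ while the right-hand side is to be evaluated from Theorem~\ref{genmain}.

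For the evaluation I would split $HF_{\text{red}}(M_n(K))$ into its ``free'' summand and the $K$-dependent part $W:=\bigoplus_{i=-g}^{g}H_{\ast+1}(\widetilde{\mathcal F}(K,i))^2$. In the regime $n\ge 2\tau(K)$ one has $d(M_n(K))=0$, and the free summand ${\Bbb F}_{(-1)}^{\,n-2\tau(K)}$ contributes $-(n-2\tau(K))$ to the Euler characteristic; in the regime $n<2\tau(K)$ one has $d(M_n(K))=-2$, and the free summand ${\Bbb F}_{(-2)}^{\,2\tau(K)-n-1}$ contributes $+(2\tau(K)-n-1)$. Substituting into the identity above, the terms carrying $n$ cancel against the $-n$ on the left, and in both regimes the equation collapses to the single relation $\chi(W)=-2\tau(K)$. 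The fact that the two cases yield the same conclusion is a reassuring internal check.

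It then remains to unwind $\chi(W)$. Since $W$ is a doubled sum, $\chi(W)=2\sum_{i=-g}^{g}\chi(H_{\ast+1}(\widetilde{\mathcal F}(K,i)))$, and the grading shift $\ast\mapsto\ast+1$ reverses the sign of the Euler characteristic, so $\chi(H_{\ast+1}(\widetilde{\mathcal F}(K,i)))=-\chi(H_\ast(\widetilde{\mathcal F}(K,i)))$. Combining this with $\chi(W)=-2\tau(K)$ gives $\sum_{i=-g}^{g}\chi(H_\ast(\widetilde{\mathcal F}(K,i)))=\tau(K)$, which is the first equality. For the second equality I would observe that the summands outside $[-g,g]$ vanish: for $i<-g$ the filtration level ${\mathcal F}(K,i)$ is trivial, while for $i>g$ we have ${\mathcal F}(K,i)=\widehat{CF}(K)$ on which $\epsilon_i$ is onto, so $\widetilde{\mathcal F}(K,i)=\widetilde{CF}(K)$ is acyclic; in either case the contribution is $0$. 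The only genuine obstacle is sign bookkeeping: fixing the normalisation of the Casson formula and, above all, tracking the grading shift in $H_{\ast+1}$ correctly through the Euler characteristic, since a slip there would reverse the sign of the whole sum. The cancellation of the $n$-dependence and the coincidence of the two regimes of Theorem~\ref{genmain} are exactly the features that lock these conventions into place.
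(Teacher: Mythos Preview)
Your proposal is correct and follows essentially the same route as the paper: compare the Dehn-surgery value $\lambda(M_n(K))=-n$ with the Euler characteristic of $HF_{\text{red}}(M_n(K))$ as computed in Theorem~\ref{genmain}, then cancel the $n$-dependence to isolate $\tau(K)$. The paper's proof is simply a terser version of what you wrote---it only spells out the $n\ge 2\tau(K)$ case (where $d=0$, so the $-\tfrac12 d(Y)$ term is invisible) and dismisses $n<2\tau(K)$ with ``by the same argument,'' whereas you carry out both regimes and verify they agree; your explicit invocation of $\lambda(Y)=\chi(HF_{\text{red}}^+(Y))-\tfrac12 d(Y)$ and the sign reversal under the grading shift $\ast\mapsto\ast+1$ makes the bookkeeping transparent.
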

\begin{proof}
If $n\ge 2\tau(K)$, then we have
$$\lambda(M_n(K))=-n=-(n-2\tau(K))+\sum_{i=-g}^g\chi(H_{\ast+1}(\widetilde{\mathcal F}(K,i))^2).$$
By using this equality, the sum of Euler numbers of the chain complex $\widetilde{\mathcal F}(K,i)$ is $\tau(K)$.
In the case of $n<2\tau(K)$, by the same argument, we get the same result.

Since $\widetilde{{\mathcal F}}(K,i)$ is acyclic in the case of $|i|\gg 0$, the last equality holds.
\qed\end{proof}

\section{The rational 4-ball bound-ness of $\Sigma_2(D_+(T_{2,2p+1},n))$ and $\Sigma_2(D_+(T_{3,3p+1},n))$.}
If a rational homology 3-sphere $Y$ is the boundary of a rational 4-ball,
then $Y$ is said to be that $Y$ {\it bounds a rational 4-ball}.
As mentioned in Section~\ref{ra4ball}, the sliceness of $K'=D_+(K,n)$ is a sufficient condition for $\Sigma_2(K')$
to bound a rational 4-ball.
We show that the sliceness is {\it not} a necessary condition.

First, we compute the $\delta$-invariant (smooth knot concordance invariant) by Manolescu-Owens.
To prove Theorem~\ref{deltainvariant}, we use the following fact:
\begin{prop}
\label{4n+12KK}
For integer $n$ we have
$$\Sigma_2(D_+(K,n))=S^3_{\frac{4n+1}2}(K\#K^r)$$
where $K^r$ is the knot $K$ with the reverse orientation.

If $\Sigma_2(D_+(K,n))$ bounds a rational 4-ball, then $n=m(m+1)$ holds for some integer $m$.
\end{prop}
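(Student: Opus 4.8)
The plan is to prove the surgery description via the \emph{Montesinos trick} and then deduce the divisibility statement from the homology of a rational-ball boundary. First I would exhibit the clasp of $D_+(K,n)$ as a rational tangle. Decompose $(S^3,D_+(K,n))$ as a ball $B_{\mathrm{out}}$ containing the ``doubled'' part of the pattern (two reverse-parallel copies of $K$), carrying a tangle $T$, glued to a ball $B_{\mathrm{in}}$ containing the clasp-with-$n$-twists tangle $\sigma$. Replacing $\sigma$ by the $0$-tangle unclasps the two antiparallel strands and yields the unknot $U$, while replacing it by the $\infty$-tangle reconnects the two reverse-parallel copies of $K$ into the connected sum $K\#K^r$. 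Taking double branched covers, the manifold $N:=\Sigma_2(B_{\mathrm{out}},T)$ has a single torus boundary, and each tangle replacement corresponds to a Dehn filling of $N$: the $0$-tangle filling gives $\Sigma_2(U)=S^3$, so $N=S^3\setminus\nu(c)$ is a knot complement, where $c$ is the core of the filling solid torus, i.e.\ the lift of the tangle axis in $\Sigma_2(U)=S^3$, and $\Sigma_2(D_+(K,n))$ is a surgery on $c$.

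Second, I would identify $c$ and the surgery slope. Since the Whitehead pattern has winding number $0$, the meridian $\mu_K$ has even linking number with $D_+(K,n)$ and hence lifts trivially, so the double cover restricted to the unbranched exterior $E(K)$ is two disjoint copies of $E(K)$. The lifted axis $c$ runs once through each copy, the covering involution reversing the orientation on one sheet, so $c=K\#K^r$ sits in $S^3=\Sigma_2(U)$. Tracking how the clasp with $n$ twists shifts the filling slope (a factor of $2$ entering from the branched double cover, together with the fixed clasp contribution) yields the coefficient $\tfrac{4n+1}{2}$. I would pin down the coefficient cleanly by a homology check: from $\Delta_{D_+(K,n)}(t)=-nt+2n+1-nt^{-1}$ one gets $\det D_+(K,n)=|\Delta_{D_+(K,n)}(-1)|=|4n+1|$, so $|H_1(\Sigma_2(D_+(K,n)))|=|4n+1|$, matching $|H_1(S^3_{(4n+1)/2}(K\#K^r))|=|4n+1|$ and fixing $p/q=(4n+1)/2$.

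The main obstacle I expect is \emph{orienting the slope computation correctly}: getting the conventions for the Montesinos correspondence between tangle slopes and Dehn-filling slopes right (the doubling of the slope in the branched cover, the sign and normalization of the clasp, and the framing of $c$) so that the coefficient is exactly $\tfrac{4n+1}{2}$ and not merely some surgery on $K\#K^r$ with the right $H_1$. The determinant computation removes the ambiguity in the numerator $p=4n+1$, and comparing two distinct fillings of $N$ pins down the denominator $q=2$.

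Finally, for the divisibility statement I would invoke the classical fact that if a rational homology sphere $Y$ bounds a rational homology $4$-ball $W$, then $|H_1(Y;{\Bbb Z})|$ is a perfect square: the kernel of $H_1(Y)\to H_1(W)$ is a metabolizer for the linking form, so its order squares to $|H_1(Y)|$. Applying this with $Y=\Sigma_2(D_+(K,n))$ shows that $|4n+1|$ is a perfect square. Since $4n+1$ is odd, the only way it is a square is $4n+1=(2m+1)^2$, which rearranges to $n=m(m+1)$; a negative value $4n+1<0$ is impossible because $s^2\equiv 3\pmod 4$ has no solution. This completes the argument.
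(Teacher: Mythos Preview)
Your proposal is correct and follows essentially the same approach as the paper: the Montesinos trick for the surgery description, and the Casson--Gordon square-order obstruction for the divisibility statement. The paper's own proof is extremely terse (one sentence each for the two assertions), whereas you have sketched the tangle decomposition, the identification of the lifted axis as $K\#K^r$, and the slope bookkeeping in considerably more detail; your determinant check and the $\bmod\ 4$ argument ruling out $4n+1<0$ are nice additions that the paper omits.
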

We notice that the condition $n=m(m+1)$ is also a necessary condition for $S^3_{\frac{4n+1}{2}}(K\#K^r)$ to bound
a rational 4-ball.
For example, one can see the condition in \cite{CG}.

\begin{proof}
The former assertion is an application of the Montesinos trick.
By using the result by Casson and Gordon \cite{CG}, the order of the first homology group of the
double branched cover is $4n+1$ and it is an odd square number.
Thus we have $n=m(m+1)$ for some integer $m$.
\qed
\end{proof}


Before proving Theorem~\ref{deltainvariant},
we introduce the correction term formula for rational Dehn surgery.
Here we give a brief review of the invariants $V_k$ and $H_k$ in \cite{ZW}.

Let $C$ be a double chain complex $C:=CFK^\infty(K)$ for a knot $K$.
Let $A_k^+$ denote
$$C\{i\ge 0\text{ or }j\ge k\}$$
and $B^+$ denote
$$C\{i\ge 0\}.$$
The maps $\frak{v}_k:A_k^+\to B^+$ are defined to be the natural projections (the forgetting map of the Alexander grading	)
$$\frak{v}_k^+:A_k^+\to B^+$$
and the maps $\frak{h}_k:A_k^+\to B^+$ are defined to be the compositions of the horizontal natural projections and the identification
$$\frak{h}_k:A_k^+\to C\{j\ge k\}\to B^+.$$
$\frak{A}_k^T\subset H_\ast(A_k^+)$ is the sub-${\Bbb F}[U]$-module isomorphic to $U^nH_\ast(A_k^+)$ for $n\gg 0$.
The homomorphisms $\frak{v}_k^T,\frak{h}_k^T$ are the restriction maps on $\frak{A}_k^T$
$$\frak{v}_k^T,\frak{h}_k^T:\frak{A}^T\cong{\mathcal T}^+\to H_{\ast}(B^+)\cong {\mathcal T}^+.$$
The maps are equivalent to the multiplication by $U^m$ for some $m\ge 0$.
We define the exponent $m$ to be $V_k$ or $H_k$ respectively.

The correction term formula by Ni and Wu in \cite{ZW} is the following:
$$d(S^3_{p/q}(K),i)=d(L(p,q),i)-2\max\{V_{\lfloor\frac{i}{q}\rfloor},H_{\lfloor\frac{i-p}{q}\rfloor}\}.$$

If $q$ is an even integer, then the canonical Spin$^c$ structure of $S^3_{p/q}(K)$ has $i_0=\frac{p+q-1}{2}$ (see \cite{MT}).
Then, since $V_{\lfloor\frac{i_0}{q}\rfloor}=H_{\lfloor\frac{i_0-p}{q}\rfloor}$, we have
$$d(S^3_{p/q}(K),i_0)=d(L(p,q),i_0)-2V_{\lfloor\frac{p+q-1}{2q}\rfloor}.$$

{\bf Proof of Theorem~\ref{deltainvariant}.}
Suppose that $s=2$ or $3$.
Let $C_{s,p}$ denote the minimal generating complex that $C_{s,p}[U,U^{-1}]$ is a chain complex of $CFK^\infty(T_{s,sp+1})$.
The chain complex $C_{2,1}$ is presented by the left of {\sc Figure}~\ref{ccomplex}.
In general, the differentials of $C_{2,p}$ are stair-shape (see the left in {\sc Figure}~\ref{Cp2gene}).
We define $C_{2,p}^{(2)}$ to be the chain complex which is isomorphic to $C_{2,p}\otimes C_{2,p}$ and that the $(i,j)$-coordinate of the element $x_0$ with the $i$-coordinate minimal is $(-p,p)$
as in {\sc Figure}~\ref{ccomplex}.
Then the knot Floer chain complex of $T_{s,sp+1}\#T_{s,sp+1}^r$ is as follows:
$$CFK^\infty:=CFK^\infty(T_{s,sp+1}\#T_{s,sp+1}^r)=C_{s,p}^{(2)}[U,U^{-1}].$$
To compute the values of $V_{s,p,k}$ for $T_{s,sp+1}\#T_{s,sp+1}^r$ we investigate $C_{s,p}^{(2)}$.
{\sc Figure}~\ref{ccomplex} presents $C_{2,1}$ and $C_{2,1}^{(2)}$.
The number written nearby each of lattice points is the dimension of the vector space corresponding to the point.
Each of elements surrounded by circles is the unique generator in $H_\ast(C_{s,p}^{(2)})\cong {\Bbb F}_{(-2p)}$
namely all the elements are homologous to each other.
The generator represents a non-trivial element in $HF^\infty(S^3)\cong {\mathcal T}^\infty\cong {\Bbb F}[U,U^{-1}]\cdot [x_0]$.

Here we define $A_{s,p,k}^+$ and $B^+$ as follows:
$$C_{s,p}^{(2)}\{i\ge 0\text{ or }j\ge k\}=:A_{s,p,k}^+$$
$$C_{s,p}^{(2)}\{i\ge 0\}=:B^+.$$

The chain complexes $C_{2,p}$ and $C_{2,p}^{(2)}$ are {\sc Figure}~\ref{Cp2gene}.
Each of the elements surrounded by the circles represents a unique homological generator in $H_\ast(C_{2,p}^{(2)})$.
\begin{figure}[htbp]\begin{center}\input{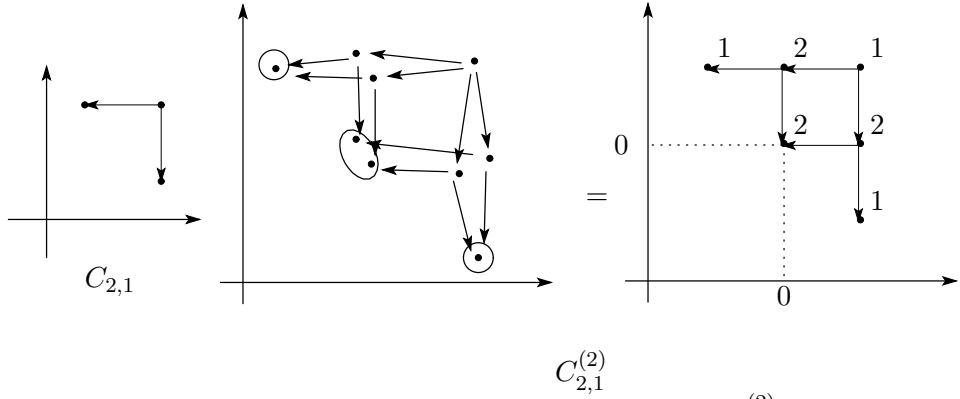}\caption{The chain complexes $C_{2,1}$ and $C_{2,1}^{(2)}$.}\label{ccomplex}\end{center}\end{figure}

Let $F_k=CFK^\infty\{i< 0\text{ and }j< k\}$.
The chain complex $A_{s,p,k}^+$ means $CFK^\infty/F_k$.
Here we define a map $\phi_{i,k}$ to be
$$\phi_{i,k}:C_{2,p}^{(2)}\overset{\varphi_i}{\hookrightarrow} CFK^\infty \to A_{2,p,k}^+.$$
The map $\varphi_i$ is defined to be $\varphi_i(x)=U^{-i}x$ for any element $x\in C_{2,p}^{(2)}$.
The induced map $(\phi_{i,k})_\ast:H_\ast(C_{2,p}^{(2)})={\Bbb F}\to H_\ast(A_{2,p,k}^+)$ is as follows:

\begin{enumerate}
\item The case of $i\ge p$ (the left in {\sc Figure}~\ref{ccomplexxx}).\\
Then $(\phi_{i,k})_\ast$ is injective.
\item The case of $|i|<p$ (the center in {\sc Figure}~\ref{ccomplexxx}).
\begin{itemize}
\item If $2i+1\le k$ then $(\phi_{i,k})_\ast$ is injective.
\item If $2i+1> k$ then $(\phi_{i,k})_\ast$ is the $0$-map.
\end{itemize}
\item The case of $i\le -p$ (the right in {\sc Figure}~\ref{ccomplexxx}).
\begin{itemize}
\item If $k\le i-p$ then $(\phi_{i,k})_\ast$ is injective.
\item If $k> i-p$ then $(\phi_{i,k})_\ast$ is the $0$-map.
\end{itemize}
\end{enumerate}
Indeed, $(\phi_{i,k})_\ast$ is the $0$-map, if and only if $F_k\cap \{(i-\ell,i+\ell)\in {\Bbb Z}^2||\ell|\le p\}\neq \emptyset$.

The values $V_{2,p,k}$ can be interpreted as follows:
$$V_{2,p,k}=\max\{p-i|(\phi_{i,k})_\ast\text{ is injective}\}.$$
\begin{figure}[htbp]\begin{center}\input{kK2.tex}\caption{The images which represent the non-trivial element in the homology of $\varphi_i(C_{2,p}^{(2)})$. Examples of $F_k$: (2) the case of $k\le 2i+1$. (3) the case of $i-p<k$.}\label{ccomplexxx}\end{center}\end{figure}
Hence, we can compute values $V_{2,p,k},H_{2,p,k}$ to become the table below.
\begin{figure}[htbp]
\begin{center}
\input{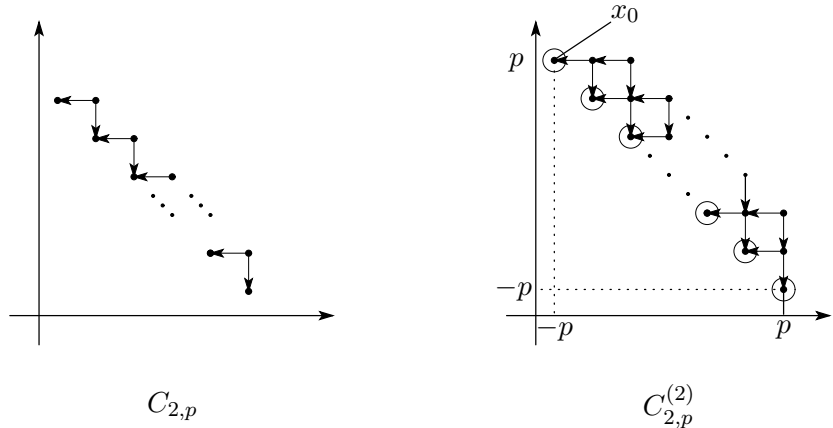}
\caption{The chain complexes $C_{2,p}$, $C_{2,p}^{(2)}$ and an element $x_0\in C_{2,p}^{(2)}\{(-p,p)\}$ which is non-vanishing in $H_\ast(C_{2,p}^{(2)})$.}
\label{Cp2gene}
\end{center}\end{figure}
$$\begin{array}{|c|c|c|c|c|c|c|c|c|}\hline
k&0&1&2&3&\cdots&2p-2&2p-1&k\ge 2p\\\hline
V_{2,p,k}&p&p&p-1&p-1&&1&1&0\\\hline
H_{2,p,k}&p&p+1&p+1&p+2&\cdots&2p-1&2p&k\\\hline\end{array}$$
$$\begin{array}{|c|c|c|c|c|c|c|c|c|c|}\hline
k&k\le -2p&-2p+1&\cdots&-2&-1&0\\\hline
V_{2,p,k}&|k|&2p&\cdots&p+1&p+1&p\\\hline
H_{2,p,k}&0&1&\cdots&p-1&p&p\\\hline\end{array}$$
Thus, we obtain the following values:
$$V_{2,p,k}=\begin{cases}p-\lfloor\frac{k}{2}\rfloor&|k|<2p\\0&k\ge 2p\\-k&k\le -2p.\end{cases}$$
$$H_{s,p,k}=V_{s,p,-k}$$

We move on to the case of $s=3$.
The pictures in {\sc Figure}~\ref{C32} are examples of the chain complexes $C_{3,p}$ and $C_{3,p}^{(2)}$.
The construction of the chain complex $C_{3,p}$ is due to \cite{OSl}.
The differentials of $C_{3,p}$ is the stair-shape whose steps are $p$-steps with slope $(1,-2)$ and consecutively $p$-steps with slope $(2,-1)$ in the positive direction of $i$.
The subcomplex $C_{3,p}^{(2)}\subset CFK^\infty(T_{3,p}\#T_{3,p}^r)$ is the chain complex which satisfies $C_{3,p}^{(2)}\cong C_{3,p}\otimes C_{3,p}$ and 
whose left most element $x_0$ as in {\sc Figure}~\ref{C32} has the coordinate $(-2p,4p)$.

The elements surrounded by circles represent the one which makes the unique generator in $H_\ast(C_{3,p})$ or $H_\ast(C_{3,p}^{(2)})$.
Notice that any torus knot is a lens space knot.
In the same way as the one of the $s=2$ case we obtain the values $V_{3,p,k}$ as follows:
$$V_{3,p,k}=\begin{cases}2p-\lfloor\frac{k}{3}\rfloor&k<6p\\2p-\lfloor\frac{2k}{3}\rfloor&-6p<k<0\\0&k\ge 6p\\-k&k\le -6p.\end{cases}$$

By using the correction term formula of lens spaces in \cite{MT}, we have 
\begin{equation}d(L(2r+1,2),j)=\begin{cases}\frac{(2k-r-2)^2}{2(2r+1)}&j=2k-1\\\frac{4k^2-4kr-4k+r^2}{2(2r+1)}&j=2k.\end{cases}\label{L2formula}
\end{equation}
In the case of $r=2n$, we have $d(L(4n+1,2),i_0)=0$.
Thus, we have 
\begin{eqnarray*}
\delta(D_+(T_{2,2p+1},n))&=&-4V_{2,p,n}=\begin{cases}0&n\ge 2p\\-4(p-\lfloor\frac{n}{2}\rfloor)&0\le n<2p\end{cases}\\
&=&-4\max\left\{p-\left\lfloor\frac{n}{2}\right\rfloor,0\right\}\end{eqnarray*}
and
\begin{eqnarray*}
\delta(D_+(T_{3,3p+1},n))&=&-4V_{3,p,n}=\begin{cases}0&n\ge 6p\\-4(2p-\lfloor\frac{n}{3}\rfloor)&0\le n<6p\end{cases}\\
&=&-4\max\left\{2p-\left\lfloor\frac{n}{3}\right\rfloor,0\right\}.\end{eqnarray*}
\begin{figure}[htbp]\begin{center}\input{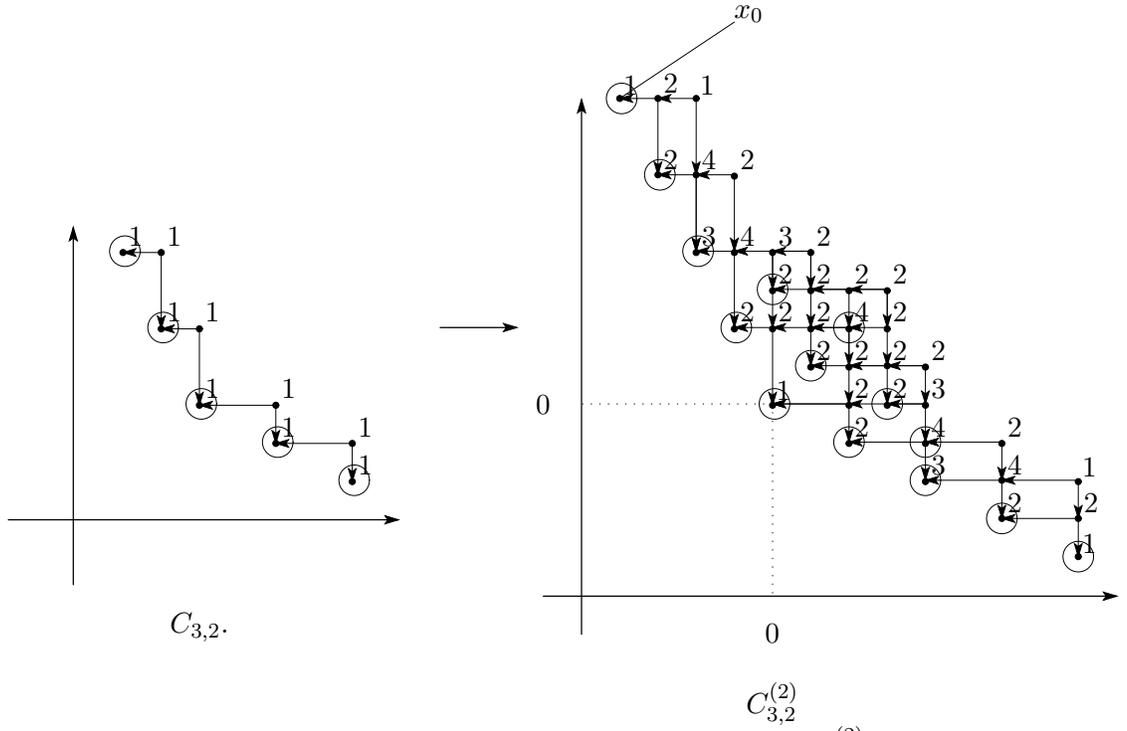}\caption{The generators and differentials of $C_{3,2}$ and $C_{3,2}^{(2)}$ and the element $x_0$.}\label{C32}\end{center}\end{figure}
\qed
\begin{rmk}
This computation of the values of $V$ can be also done in the cases of more general torus knots,
however to compute the such chain complexes derails our aims, and we do not write down it here.
It, however, seems that the determination of the rational 4-ball bound-ness for the double branched covers of twisted Whitehead double of any torus knot or more general knots becomes a challenging work.
\end{rmk}

As a corollary we give a sufficient condition to satisfy $\delta(D_+(K,n))=0$ for a knot $K$ with non-negative $\tau(K)$.
\begin{cor}
\label{deltazerocor}
Let $K$ be a knot in $S^3$ with $\tau(K)\ge 0$.
If $n\ge 2\tau(K)$, then 
$$\delta(D_+(K,n))=0.$$
\end{cor}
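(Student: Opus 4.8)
The plan is to run exactly the argument that proved Theorem~\ref{deltainvariant}, but with the explicit chain complexes $C_{s,p}^{(2)}$ replaced by a structural vanishing statement for the invariants $V_k$ of $K\#K^r$. First I would collapse $\delta$ to a single $V$-value. By definition $\delta(D_+(K,n))=2d(\Sigma_2(D_+(K,n)),\frak{c}_0)$, and by Proposition~\ref{4n+12KK} we have $\Sigma_2(D_+(K,n))=S^3_{\frac{4n+1}{2}}(K\#K^r)$, so $p=4n+1$, $q=2$, and the canonical label is $i_0=\frac{p+q-1}{2}=2n+1$. Feeding this into the Ni--Wu formula for even $q$ gives
$$d(\Sigma_2(D_+(K,n)),\frak{c}_0)=d(L(4n+1,2),i_0)-2V_{\lfloor i_0/2\rfloor}(K\#K^r)=-2V_n(K\#K^r),$$
where I reuse the computation $d(L(4n+1,2),i_0)=0$ (the case $r=2n$) already established inside the proof of Theorem~\ref{deltainvariant}. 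Hence $\delta(D_+(K,n))=-4V_n(K\#K^r)$, precisely as in the torus knot case, and since every $V_k\ge 0$ the whole corollary reduces to the claim that $V_n(K\#K^r)=0$ whenever $n\ge 2\tau(K)$.

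To handle this claim I would use two standard inputs. Since $\tau$ is additive under connected sum and unchanged by orientation reversal, $\tau(K\#K^r)=2\tau(K)$, so the hypothesis $n\ge 2\tau(K)$ says $n\ge\tau(K\#K^r)$. Next, the K\"unneth splitting of $CFK^\infty$ under connected sum gives the subadditivity $V_{a+b}(K_1\#K_2)\le V_a(K_1)+V_b(K_2)$. Splitting the index as $n=\tau(K)+(n-\tau(K))$, where both summands are $\ge\tau(K)$ exactly because $n\ge 2\tau(K)$, and using $V_k(K^r)=V_k(K)$, I obtain
$$V_n(K\#K^r)\le V_{\tau(K)}(K)+V_{n-\tau(K)}(K)\,,$$
so everything comes down to the per-summand vanishing $V_k(K)=0$ for all $k\ge\tau(K)$.

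The hard part will be this last step, the vanishing $V_{\tau(K)}(K)=0$, equivalently $\nu^+(K)=\tau(K)$. For the positive L-space knots actually used in Theorem~\ref{deltainvariant} and Theorem~\ref{lspathm} it is free: there $\tau(K)=g(K)$ and $V_k$ vanishes once $k\ge g(K)$, because the region $\{i<0,\ j\ge k\}$ that distinguishes $A_k^+$ from $B^+$ becomes empty; this is exactly what the explicit staircase tables for $V_{2,p,k}$ and $V_{3,p,k}$ record. For a general knot with $\tau(K)\ge 0$ the vanishing at the sharp threshold $k=\tau(K)$ is the genuinely nontrivial point, and I would attempt it from the filtered complex by showing that for $k\ge\tau(K)$ the bottom of the tower in $H_\ast(A_k^+)$ already lies in the $\{i\ge 0\}$ part in Maslov grading $0$, so that $\frak{v}_k$ is a grading-preserving isomorphism on towers. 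Pinning down this grading statement, rather than any of the preceding formal manipulations, is where the real content of the corollary lies.
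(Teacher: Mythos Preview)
Your reduction $\delta(D_+(K,n))=-4V_n(K\#K^r)$ and the subsequent strategy are correct, but the detour through subadditivity is unnecessary and the paper avoids it. Rather than splitting $V_n(K\#K^r)\le V_{\tau(K)}(K)+V_{n-\tau(K)}(K)$ and then proving the per-factor vanishing $V_{\tau(K)}(K)=0$, the paper simply applies your ``hard part'' directly to the knot $K\#K^r$: since $\tau(K\#K^r)=2\tau(K)$, one exhibits a cycle $x+\alpha$ in $A_{2\tau(K)}^+$ of Maslov grading $0$ whose image under $\frak{v}_{2\tau(K)}^+$ is the bottom of the tower in $B^+$, forcing $V_{2\tau(K)}(K\#K^r)=0$ in one step. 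The monotonicity $V_k\ge V_{k+1}\ge 0$ then finishes. So the grading argument you correctly isolate as the crux \emph{is} the paper's proof---it just runs it once, on the connected sum, instead of twice on the summands together with a K\"unneth inequality. Your route is valid and perhaps more modular (the statement $\nu^+(K)\le\tau(K)$ for $\tau(K)\ge 0$ is useful in its own right), but it is a longer path to the same destination.
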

\begin{proof}
We claim that if $k=2\tau(K)=\tau(K\#K^r)$, then we have $V_k=0$.
Then, by the decreasing property $V_k\ge V_{k+1}\ge 0$, the assertion required holds.

Let $C$ denote $CFK^\infty(K\#K^r)$ and let $k$ denote $2\tau(K)$.
There exist a generator $x\in C\{(0,k)\}$ and some element $\alpha\in C\{\max\{i,j- k\}\ge 0\}$ such that
a non-zero class $[x+\alpha]\in H_\ast(A_k^+)$, and its image by $\frak{v}_k^+:H_\ast(A_k^+)\to H_\ast(B^+)={\mathcal T}^+$ is the bottom generator.
Thus this means that $[x+\alpha]\neq 0$.
Clearly, $U\cdot [x+\alpha]\neq 0$, $[x+\alpha]$ is the bottom generator in $A^T_k$.
This means $V_k=0$.
See {\sc Figure}~\ref{Aplusk}.

Therefore, for any $n\ge 2\tau(K)$, we have
$$\delta(D_+(K,n))=2d(S^3_\frac{4n+1}{2}(K\#K^r),i_0)=2d(L(4n+1,2),i_0)-4V_{n}=0-0=0.$$
\qed\end{proof}
\begin{figure}[htbp]\begin{center}\input{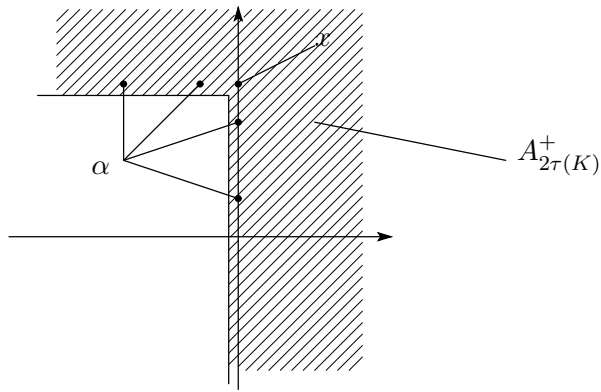}\caption{The generator $x$ and some element $\alpha$ in $A_{2\tau(K)}^+$.}\label{Aplusk}\end{center}\end{figure}
If $0\le n\le 2\tau(K)$, then the behavior of $V_n$  depends on the filtered chain complex with respect to $K$.

{\bf Proof of Theorem~\ref{lspathm}.}
Let $K$ be a positive L-space knot.
Then the genus $g(K)$ coincides with $\tau(K)$.
We choose the minimal chain complex $CFK^\infty$, which the dimension of $CFK^\infty\{(i,j)\}$ is at most one.
Since $CFK^\infty\{(0,\tau(K)-1)\}$ has dimension one due to \cite{H2, MT2}, the chain complex $CFK^\infty(K)$ is as in {\sc Figure}~\ref{lsp}.

As described in {\sc Figure}~\ref{lsp}, $V_{\tau(K)}=0$ and $V_{\tau(K)-1}=1$ hold.
Thus, from Corollary~\ref{deltazerocor}, if $n\ge 2\tau(K)$ holds, then $\delta(D_+(K,n))=0$.
From {\sc Figure}~\ref{lsp} we have
$$\delta(D_+(K,2\tau(K)-1))=-4V_{2\tau(K)-1)}=-4.$$
This means $t_\delta(K)=2\tau(K)$.
\begin{figure}[htbp]
\begin{center}\input{lsp}\end{center}\caption{The minimal $CFK^\infty(K)$ of an L-space knot $K$ (the left picture).
 $A_{2\tau(K)-1}^+$, $A_{2\tau(K)}^+$ for $K\#K^r$ (the central and right pictures). The points indicated by the circles represent the unique generator for each of homologies.}\label{lsp}\end{figure}
\hfill\qed

\subsection{An obstruction by Owens and Strle.}
If $K$ is a slice knot, then the double branched cover $\Sigma_2(K)$ must bound a rational 4-ball.
To show Theorem~\ref{RationalBall} (the rational 4-ball bound-ness), we use the following refinement of the $\delta$-invariant by Owens and Strle.
\begin{prop}[\cite{Ow}]
\label{OW}
Let $Y$ be a rational homology 3-sphere bounding a rational 4-ball $X$.
If the order of $H^2(Y)$ is $h=t^2$, then 
$$d(Y,\frak{t}_0+\beta)=0$$
for any $\beta\in {\mathcal T}\subset H^2(Y)$,
where $t_0$ is a Spin$^c$ structure, ${\mathcal T}$ is the image of the map $H^2(X)\to H^2(Y)$,
and $|{\mathcal T}|=t$.
\end{prop}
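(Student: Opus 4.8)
The plan is to deduce the vanishing of each $d(Y,\frak{t}_0+\beta)$ from the Ozsv\'ath--Szab\'o $d$-invariant inequality together with an orientation-reversal symmetry. Recall that for a smooth four-manifold $X$ with $b_2^+(X)=0$ and boundary a rational homology sphere $Y$, any Spin$^c$ structure $\frak{s}$ on $X$ satisfies
$$c_1(\frak{s})^2+b_2^-(X)\le 4\,d(Y,\frak{s}|_Y)$$
(Ozsv\'ath--Szab\'o). Since $X$ is a rational $4$-ball we have $H_2(X;\mathbb{Q})=0$, so $b_2^-(X)=0$ and the rational self-intersection $c_1(\frak{s})^2$ vanishes (the intersection form on $H_2(X;\mathbb{Q})$ is trivial). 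Hence the inequality reduces to $0\le 4\,d(Y,\frak{s}|_Y)$, that is, $d(Y,\frak{t})\ge 0$ for every Spin$^c$ structure $\frak{t}$ on $Y$ that extends over $X$.

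First I would identify the extendable Spin$^c$ structures. Since $\mathrm{Spin}^c(Y)$ is a torsor over $H^2(Y;\mathbb{Z})$ and restriction $\mathrm{Spin}^c(X)\to\mathrm{Spin}^c(Y)$ is equivariant with respect to this action and the homomorphism $H^2(X;\mathbb{Z})\to H^2(Y;\mathbb{Z})$, the image of the restriction map is exactly the coset $\frak{t}_0+\mathcal{T}$, where $\frak{t}_0$ is the restriction of a chosen Spin$^c$ structure on $X$ and $\mathcal{T}=\mathrm{im}(H^2(X)\to H^2(Y))$. Thus the inequality above already yields $d(Y,\frak{t}_0+\beta)\ge 0$ for all $\beta\in\mathcal{T}$.

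Next I would run the same argument on the reversed manifold. The oriented manifold $-X$ is again a rational $4$-ball, now with boundary $-Y$, and every $\frak{t}\in\frak{t}_0+\mathcal{T}$ still extends over it. Applying the inequality to $-X$ gives $d(-Y,\frak{t})\ge 0$, and combining this with the orientation-reversal identity $d(-Y,\frak{t})=-d(Y,\frak{t})$ produces $d(Y,\frak{t})\le 0$. Together with the first bound this forces $d(Y,\frak{t}_0+\beta)=0$ for every $\beta\in\mathcal{T}$, which is the assertion. The stated equality $|\mathcal{T}|=t$ records that $\mathcal{T}$ is a metabolizer of the linking form of $Y$, by the half-lives-half-dies principle; it is not needed for the vanishing itself, only to count the controlled Spin$^c$ structures.

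The only genuinely delicate point is the bookkeeping of Spin$^c$ structures: one must check that restriction really surjects onto the whole coset $\frak{t}_0+\mathcal{T}$, and that the orientation reversal of $X$ matches the identification of $\mathrm{Spin}^c(Y)$ with $\mathrm{Spin}^c(-Y)$, so that the two inequalities are applied to one and the same $\frak{t}$. Everything else is a direct substitution into the Ozsv\'ath--Szab\'o inequality, using only that a rational homology $4$-ball has vanishing rational intersection form.
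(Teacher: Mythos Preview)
The paper does not give its own proof of this proposition: it is quoted, with attribution, from Owens and Strle \cite{Ow} and then applied as a black box. Your argument is correct and is exactly the standard one behind the cited result---apply the Ozsv\'ath--Szab\'o inequality to both $X$ and $-X$, using that a rational $4$-ball has $b_2^\pm=0$ and torsion $c_1$, to squeeze each extendable $d$-invariant between $0$ and $0$. There is therefore nothing to compare: your proof supplies what the paper simply imports.
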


By using Proposition~\ref{4n+12KK} and \ref{OW} for the half-integer surgery of $T_{2,2p+1}\#T_{2,2p+1}^r$ we prove Theorem~\ref{RationalBall}.

{\bf Proof of Theorem~\ref{RationalBall}.}
Suppose that 
$$X_{s,p,m}:=\Sigma_2(D_+(T_{s.sp+1},m(m+1)))$$
bounds a rational 4-ball.

Since the canonical Spin$^c$ structure corresponds to $i_0=2m(m+1)+1$, 
Owens and Strle's subset $\frak{t}_0+{\mathcal T}$ is $\{i_0+\ell(2m+1)|0\le|\ell|\le m\}$,
because the subgroup of order $2m+1$ in $H^2(X_{s,p,m},{\Bbb Z})\cong {\Bbb Z}/(2m+1)^2{\Bbb Z}$
is unique.

By using the formula~(\ref{L2formula}), we have	
$$d(L((2m+1)^2,2),i_0+\ell(2m+1))=\begin{cases}2\ell_1(\ell_1-1)&\ell=2\ell_1-1\\2\ell_1^2&\ell=2\ell_1,\end{cases}$$
$$V_{s,p,\lfloor\frac{i_0+\ell(2m+1)}{2}\rfloor}=V_{s,p,m(m+1+\ell)+\lfloor\frac{\ell+1}2\rfloor}$$
$$H_{s,p,\lfloor\frac{i_0+\ell(2m+1)-(2m+1)^2)}{2}\rfloor}=V_{s,p,m(m+1-\ell)-\lfloor\frac{\ell}{2}\rfloor}$$
and 
$$\max\{V_{s,p,\lfloor\frac{i_0+\ell(2m+1)}{2}\rfloor},H_{s,p,\lfloor\frac{i_0+\ell(2m+1)-(2m+1)^2)}{2}\rfloor}\}=V_{s,p,m(m+1)-m\ell-\lfloor\frac{\ell}{2}\rfloor}.$$
Thus we have
$$d(X_{s,p,m},i_0+\ell(2m+1))=\begin{cases}2\ell_1(\ell_1-1)-2V_{s,p,m(m+1-\ell)-\ell_1+1}&\ell=2\ell_1-1\\2\ell_1^2-2V_{s,p,m(m+1-\ell)-\ell_1}&\ell=2\ell_1.\end{cases}$$

Suppose that $\ell=2\le m$ holds.
Then we have $V_{s,p,m^2-m-1}=1$.
In the case of $s=2$, since $m^2-m-1$ is odd, we obtain $m^2-m-1=2p-1$.
In the case of $s=3$, since $m^2-m-1\equiv \pm1\bmod 6$, we have $m^2-m-1=6p-1$.
If $m=2$, then $(s,p)=(2,1)$ holds. 

Suppose that $m\ge 3$ and $\ell=3\le m$.
Then we have $V_{s,p,m^2-2m-1}=2$.
In the case of $s=2$, we have $m^2-2m-1=2s-3$, or $2s-4$, thus $(m,p)=(3,3)$.
In the case of $s=3$, since $m^2-2m-1\neq 3,6\bmod 6$, we have $m^2-2m-1=6p-5,6p-4$ holds.
Thus $(p,m)=(2,4),(1,3)$.

Therefore, we obtain $(s,p,m)=(2,1,2)$, $(2,3,3)$, $(3,1,3)$, and $(3,2,4)$.\hfill\qed

The obstruction (Theorem~\ref{RationalBall}) by Owens and Strle \cite{Ow} for $K=T_{2,2p+1}$ and $T_{3,3p+1}$ coincides with
the condition for $\Sigma_2(D_+(K,n))$ to bound a rational 4-ball.
It would be unlikely that the coincidence can be extended to other torus knots version.
For example, the double branched covers of $D_+(T_{3,5},12)$, $D_+(T_{3,11},30)$ and $D_+(T_{3,20},56)$ satisfy the condition by Owens and Strle,
however we do not know whether rational 4-ball bounds of the manifolds exist or not.
\begin{que}
Let $K'$ be $D_+(T_{3,5},12)$, $D_+(T_{3,11},30)$ or $D_+(T_{3,20},56)$.
Then does the double branched covers of $K'$ bound a rational 4-ball?
\end{que}

{\bf Proof of Theorem~\ref{2712}.}
The knots $K'=D_+(T_{2,7},12)$ and $D_+(T_{3,7},20)$ are not slice by Collins' result \cite{JC}.
We prove that each of the double branched cover of $K'$ actually bounds a rational 4-ball.
Each of the double branched covers is $\Sigma_2(K')=S^3_{49/2}(T_{2,7}\#T_{2,7}^r)$, $S^3_{81/2}(T_{3,7}\#T_{3,7}^r)$.
They are diffeomorphic to each of the last diagrams in {\sc Figure}~\ref{rationalh} and ~\ref{rationalhd}.
The $0$-framed components in the last diagrams consist of slice knots $S_1$ and $S_2$ respectively.
Indeed, the knot $S_1$ (in the $K'=D_+(T_{2,7},12)$ case) is $10_{155}$ in the Rolfsen table.
It is a well-known slice knot.
The sliceness of $S_2$ (in the $K'=D_+(T_{3,7},20)$ case) is proven in {\sc Figure}~\ref{ribbon3}.
\begin{figure}[bthp]
\begin{center}\includegraphics{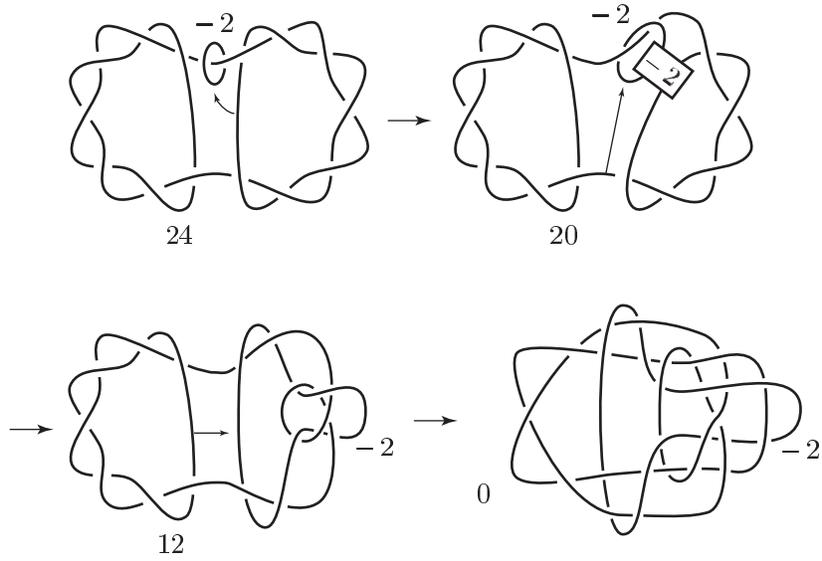}\caption{A diffeomorphism of $S^3_{49/2}(T_{2,7}\#T_{2,7}^r)$.}\label{rationalh}\end{center}
\end{figure}
\begin{figure}[bthp]
\begin{center}\includegraphics{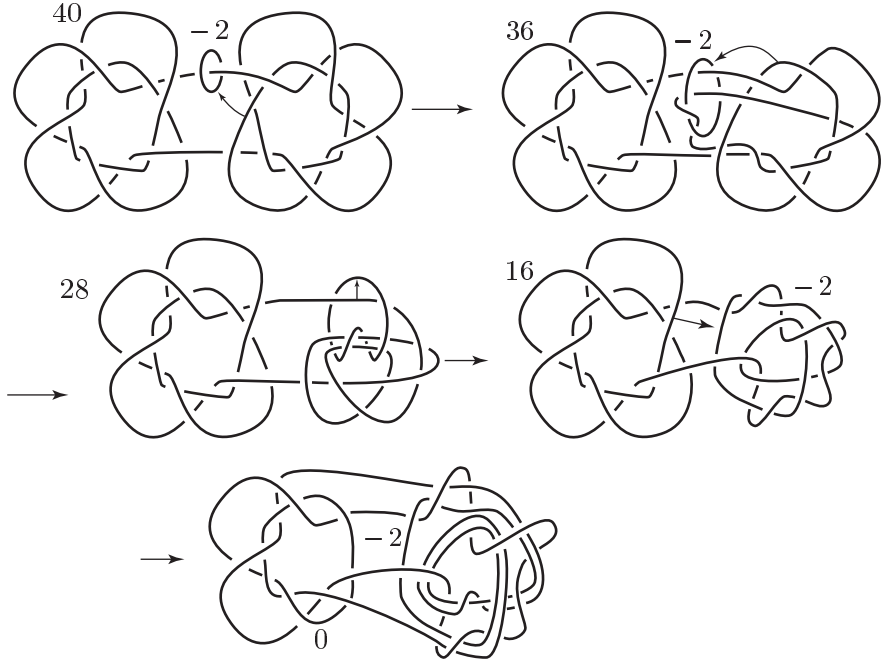}\caption{A diffeomorphism of $S^3_{81/2}(T_{3,7}\#T_{3,7}^r)$.}\label{rationalhd}\end{center}
\end{figure}
\begin{figure}[btbp]
\begin{center}\includegraphics{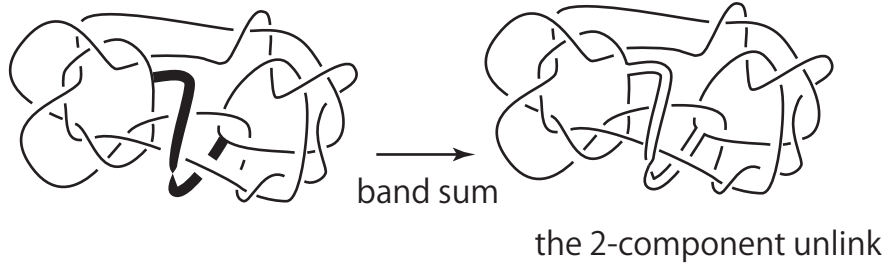}\caption{The sliceness of $S_2$.}\label{ribbon3}\end{center}
\end{figure}
Let $B_7$, and $B_9$ be the slice disk complements of $S_1$ and $S_2$ together with the $-2$-framed 2-handles.
See {\sc Figure}~\ref{rationalh2} and \ref{ratb}.
By computing $H_1$ and $H_2$ of $B_7$ and $B_9$ from the last pictures in {\sc Figure}~\ref{rationalh2} and \ref{ratb} respectively, we obtain immediately
$$H_1(B_7)\cong {\Bbb Z}/7{\Bbb Z}\text{ and }H_2(B_7)\cong 0$$
and 
$$H_1(B_9)\cong {\Bbb Z}/9{\Bbb Z}\text{ and }H_2(B_9)\cong 0.$$
Hence, $B_7$ and $B_9$ are rational 4-balls bounding the double branched covers of $D_+(T_{2,7},12)$ and $D_+(T_{3,7},20)$ respectively.
\begin{figure}[btbp]
\begin{center}\includegraphics{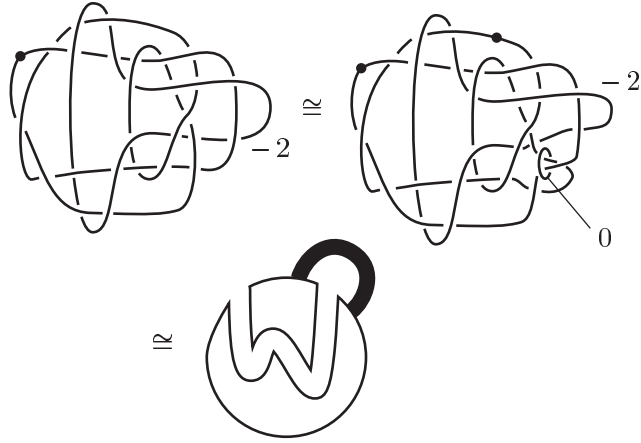}\caption{A rational 4-ball $B_7$: A slice disk complement of $S_1=10_{155}$ attaching $-2$-framed 2-handle.}\label{rationalh2}\end{center}
\end{figure}
\begin{figure}[btbp]
\begin{center}\includegraphics{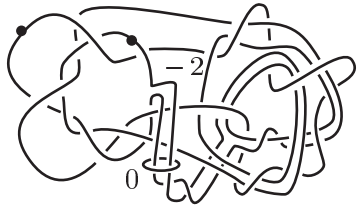}\caption{A rational 4-ball $B_9$: A slice disk complement of $S_2$ attaching $-2$-framed 2-handle.}\label{ratb}\end{center}
\end{figure}
\qed

\end{document}